\numberwithin{equation}{section}
\theoremstyle{plain}
\newtheorem{thm}{Theorem}[section]
\newtheorem{proposition}[thm]{Proposition}
\newtheorem{corollary}[thm]{Corollary}
\newtheorem{lemma}[thm]{Lemma}
\theoremstyle{definition}
\newtheorem{remark}[thm]{Remark}
\DeclareMathOperator{\E}{{\mathbb E}}
\DeclareMathOperator{\R}{{\mathbb R}}
\DeclareMathOperator{\N}{{\mathbb N}}
\DeclareMathOperator{\PP}{{\mathbb P}}
 \DeclareMathOperator{\sgn}{sgn}
\newcommand{\EstGen}{\hat F_\Phi}
\newcommand{\EstNorm}{\hat F_p}
\newcommand{\EstAsymp}{\hat{F}_{\Phi,n}}
\newcommand{\Funct}{F_{\Phi}}
\providecommand{\eps}{\varepsilon}
\renewcommand{\phi}{\varphi}
\renewcommand{\subset}{\subseteq}
\providecommand{\abs}[1]{\lvert #1 \rvert}
\providecommand{\norm}[1]{\lVert #1 \rVert}
\renewcommand{\le}{\leqslant}
\renewcommand{\ge}{\geqslant}
\begin{document}
\title[Functional estimation in nonparametric boundary models]{Functional estimation and hypothesis testing in nonparametric boundary models}
%\date{\today}
\author{Markus Rei\ss}
\author{Martin Wahl}
\address{Markus Rei\ss{}, Institut f\"{u}r Mathematik, Humboldt-Universit\"{a}t zu Berlin, Unter den Linden 6, 10099 Berlin, Germany.}
\email{mreiss@math.hu-berlin.de}
\address{Martin Wahl, Institut f\"{u}r Mathematik, Humboldt-Universit\"{a}t zu Berlin, Unter den Linden 6, 10099 Berlin, Germany.}
\email{martin.wahl@math.hu-berlin.de}

\keywords{Poisson point process, Support estimation, Non-linear functionals, Minimax hypothesis testing.}

\subjclass[2010]{62G05, 62G10, 62G32, 62M30}

\thanks{We are grateful for the helpful comments by the referees. Financial support by the DFG via Research Unit 1735 {\em Structural Inference in Statistics: Adaptation and Efficiency} is acknowledged.}

\begin{abstract}
Consider a Poisson point process  with unknown support boundary curve $g$, which forms a prototype of an irregular statistical model. We address the problem of estimating non-linear functionals of the form $\int \Phi(g(x))\,dx$. Following a nonparametric maximum-likelihood approach, we construct an estimator which is UMVU over H\"older balls and achieves the (local) minimax rate of convergence. These results hold under weak assumptions on $\Phi$ which are satisfied for $\Phi(u)=|u|^p$, $p\ge 1$. As an application, we consider the problem of estimating the $L^p$-norm and derive the minimax separation rates in the corresponding nonparametric hypothesis testing problem. Structural differences to results for regular nonparametric models are discussed.
\end{abstract}

\maketitle
%\tableofcontents
\section{Introduction}

Point processes serve as canonical models for dealing with support estimation. Poisson point processes (PPP) appear in the continuous limit of nonparametric regression models with one-sided or irregular error variables, cf. Meister and Rei\ss\ \cite{Meister13}, and thus form counterparts of the Gaussian white noise (GWN) model. In this paper we consider the observation of a PPP on $[0,1]\times\R$ with intensity function
\begin{equation}\label{EqIntensity}
\lambda_g(x,y)=n\mathbbm{1}(y\ge g(x)),\qquad x\in[0,1],y\in\R,
\end{equation}
where $g$ is an unknown support boundary curve and $n\in\N$. A prototypical regression model corresponding to this PPP model with $n$ replaced by $\alpha n$ is given by
\begin{equation}\label{EqRegression} Y_i=g(i/n)+\eps_i,\quad i=1,\ldots,n,
\end{equation}
with one-sided i.i.d. error variables $\eps_i\ge 0$, satisfying $\PP(\eps_i\le x)= \alpha x+O(x^2)$ as $x\downarrow 0$, cf. the discussion in Rei\ss\ and Selk \cite{ReissSelk14}. The important point to keep in mind is that in these support boundary models, the standard parametric rate is $n^{-1}$ due to the behaviour of extreme value statistics.

In Korostelev and Tsybakov \cite[Chapter 8]{KorTsy93} the problem  of estimating functionals of a binary image boundary from noisy observations has been studied. Although the noise is regular, the Hellinger metric is an $L^1$-distance exactly as in our PPP model. In both models, the minimax rate of convergence for estimating linear functionals of the form $\langle g,\psi\rangle=\int_0^1 g(x)\psi(x)\,dx$, $\psi\in L^2([0,1])$, is $n^{-(\beta+1/2)/(\beta+1)}$ over the H\"older ball
\[
\mathcal{C}^\beta(R)=\{g:[0,1]\rightarrow\R: |g(x)-g(y)|\le R|x-y|^\beta\,\forall
x,y\in[0,1]\}
\]
with $\beta\in(0,1]$ and radius $R>0$.  For the PPP model Rei\ss{} and Selk \cite{ReissSelk14}
build up a nonparametric maximum-likelihood approach and
construct an unbiased estimator achieving this rate.  Besides minimax
optimality, their estimator has the striking property of being UMVU (uniformly of
minimum variance among all unbiased estimators) over $\mathcal{C}^\beta(R)$.

\begin{table}
\vspace{2mm}
\begin{center}
\begin{tabular}{|l|l|l|} \hline
Rate & PPP& GWN\\\hline
 estimate $g(x)$ &  $n^{-\beta/(\beta+1)}$ & $n^{-\beta/(2\beta+1)}$\\\hline
 estimate $\langle g,\psi\rangle$ & $n^{-(\beta+1/2)/(\beta+1)}$ & $n^{-1/2}$\\\hline
estimate $\norm{g}_p^p$ & $n^{-(\beta+1/2)/(\beta+1)}$ & $p=2$: $n^{-4\beta/(4\beta+1)}\vee n^{-1/2}$\\\hline
estimate $\norm{g}_p$ & $n^{-(\beta+1/(2p))/(\beta+1)}$ & $p$ even: $n^{-\beta/(2\beta+1-1/p)}$\\\hline
testing & $n^{-(\beta+1/(2p))/(\beta+1)}$ & $n^{-\beta/(2\beta+1/2+(1/2-1/p)_+)}$\\
\hline
\end{tabular}
\end{center}
\vspace{2mm}
\caption{Minimax rates for regularity $\beta$ in the Poisson point process
(PPP) and Gaussian white noise (GWN) model.}\label{tab1}
\vspace{-5mm}
\end{table}

Here, we consider the problem of estimating and testing  non-linear functionals of the form
\begin{equation}\label{EqFunctional}
\Funct(g)=\int_0^1\Phi(g(x))\, dx,
\end{equation}
where $\Phi:\R\to\R$ is a known weakly differentiable function with derivative $\Phi'\in L_{loc}^1(\R)$ (i.e. $\Phi(u)=\Phi(0)+\int_0^u\Phi'(v)\,dv$, $u\in\R$, holds). An important class of functionals of the form \eqref{EqFunctional} is given by $p$-th powers $\norm{g}_p^p$ of $L^p$-norms using $\Phi(u)=|u|^p$, $p\ge 1$.

We show that it is still possible to construct an unbiased estimator of $\Funct(g)$ which is UMVU over $\mathcal{C}^\beta(R)$. Moreover, under weak assumptions on $\Phi'$, we compute the minimax risk of estimation over small neighbourhoods of $g$ and show that the estimator achieves the local minimax rate of convergence $\|\Phi'\circ g\|_2n^{-(\beta+1/2)/(\beta+1)}$. For the special case of estimating $\|g\|_p^p$ and the $L^p$-norm $\|g\|_p$, we prove that the minimax rates of convergence over $\mathcal{C}^\beta(R)$ are $n^{-(\beta+1/2)/(\beta+1)}$ and $n^{-(\beta+1/(2p))/(\beta+1)}$, respectively.

Based on these results we consider the testing problem $H_0:g=g_0$ versus $H_1:g\in \{g_0+h\in \mathcal{C}^\beta(R):\|h\|_p\ge r_n\}$, where the nonparametric alternative is separated by a ball of radius $r_n>0$ in $L^p$-norm. We show that the minimax separation rate is $n^{-(\beta+1/(2p))/(\beta+1)}$ and that this rate can be achieved by a plug-in test, using a minimax optimal estimator of the $L^p$-norm of $g$.
In particular, the minimax rates of testing and estimation coincide, and they are located strictly between the parametric rate $n^{-1}$ and the rate $n^{-\beta/(\beta+1)}$, corresponding to the problem of estimating the function $g$ itself (see e.g. Jirak, Meister and Rei\ss{} \cite{Moritz14} and the references therein).

These fundamental questions have been  studied extensively in the mean regression and Gaussian white noise (GWN) model. In the latter, we observe a realisation of
\[
dX(t)=g(t)\,dt+ n^{-1/2}\,dW(t),\quad t\in[0,1],
\]
where $g$ is the unknown regression function and $(W(t):t\in[0,1])$ is a standard Brownian motion. Significant differences appear. Consider, for instance, the case $\Phi(u)=|u|^p$ with $p\in\N$. For $p$ even and $\beta$ large enough, the smooth functional \eqref{EqFunctional} can be estimated with the parametric rate of convergence $n^{-1/2}$, using the method from Ibragimov, Nemirovski and Khasminski \cite{INK86} (see Table \ref{tab1} for the case $p=2$ and the monograph by Nemirovski \cite{Nem00} for more general functionals). Estimation of the $L^p$-norm has been considered by Lepski, Nemirovski and Spokoiny \cite{LepNemSpo99}. For $p$ even, the optimal rate of convergence is $n^{-\beta/(2\beta+1-1/p)}$, while for $p$ odd, the standard nonparametric rate $n^{-\beta/(2\beta+1)}$ can only be improved by $\log n$ factors. In Table~\ref{tab1} we compare these GWN estimation rates with the PPP rates. A structural difference is that  for vanishing regularity $\beta\downarrow 0$ the GWN convergence rates become arbitrarily slow, while in the PPP case the rates always remain faster than $n^{-1/2}$ and $n^{-1/(2p)}$, respectively. This phenomenon will be further discussed at the beginning of Section \ref{SecEst}. More generally, the PPP rates  hold universally for all $1\le p<\infty$, while the GWN rates depend on $p$ in a very delicate way, showing  that $L^p$-norm estimation is to some extent a regular estimation problem in the otherwise rather irregular PPP statistical model.

\begin{figure}[t]
\includegraphics[width=0.9\textwidth,height=0.25\textheight]{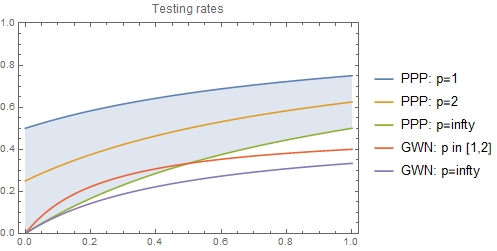}
\caption{Testing rate exponents for the Poisson point process
(PPP) and Gaussian white noise (GWN) model as a function of the regularity
$\beta$.}
\label{fig1}
\end{figure}

Further differences arise in the testing problem, which for the GWN model is the topic of the monograph by Ingster and Suslina \cite{IngsterSuslina03}.  The testing problem $H_0:g=0$ versus $H_1:g\in\{h\in L^2([0,1]):\|h\|_p\ge r_n \text{ and } \|h\|_{\beta,q}\le R\}$ is considered, where $\|\cdot\|_{\beta,q}$ is a Sobolev or Besov norm with smoothness measured in $L^q$-norm. For instance, in the case $1\le p\le 2$ and $q=\infty$, the minimax separation rate is $n^{-2\beta/(4\beta+1)}$ which coincides with the minimax rate for estimating the $L^p$-norm if $p=2$ but not if $p=1$. The general minimax GWN separation rates for the case $q\ge p$ are given in the last row of Table \ref{tab1}  (for the cases $1\le p \le 2$, $p\le q\le \infty$ and  $2<p=q<\infty$), results for the case $q<p$ can be found in Lepski and Spokoiny \cite{LepskiSpokoiny99}. Figure~\ref{fig1} visualises  the differences between the GWN and the PPP case by plotting the separation rate exponents for the range of $p\in[1,\infty)$ as a function of the regularity $\beta$.  In the GWN model the rates become arbitrarily slow when $\beta$ approaches zero and they do not change for $p\in[1,2]$ ({\it elbow effect}), which is not the case in the PPP case. The absence of an elbow effect in the PPP model may be understood by a different Hellinger geometry: the Hellinger distance is given by an $L^1$-distance between the curves, while it is based on the $L^2$-distance in the GWN model.

In the next Section \ref{SecEst} we construct the
estimator, compute its mean and variance using the underlying point process
geometry and martingale arguments, and we derive the (local) minimax rates of convergence. In Sections
\ref{SecTesting} and \ref{SecEstLp}, we focus on the special case where $\Phi(u)=|u|^p$ and apply our
results to the problem of estimating the $L^p$-norm and to the corresponding hypothesis testing problem.

\section{Estimation of non-linear functionals}\label{SecEst}

\subsection{The estimator}

Let $(X_j,Y_j)_{j\ge 1}$ be the observed support points of a Poisson point process on $[0,1]\times\R$ with intensity function given by \eqref{EqIntensity}. The support boundary curve $g$ is supposed to lie in the H\"older ball $\mathcal{C}^\beta(R)$ with $\beta\in(0,1]$. The aim is to estimate the functional in \eqref{EqFunctional}. Similarly to \cite{ReissSelk14}, our estimation method can be motivated as follows. Suppose that we know a deterministic function $\bar g\in \mathcal{C}^\beta(R)$ with $\bar
g(x)\ge g(x)$ for all $x\in[0,1]$. Then the sum
\begin{equation}\label{EqSumBar}
\frac1n\sum_{j\ge 1}\Phi'(Y_j)\mathbbm{1}\big(\bar g(X_j)\ge Y_j\big)
\end{equation}
is a.s. finite, has expectation equal to
\begin{align*}
&\frac1n\int_0^1\int_{\R}\Phi'(y)\mathbbm{1}\big(\bar g(x)\ge y\big)\lambda_g(x,y)\,dydx
=\int_0^1\big(\Phi(\bar g(x))-\Phi(g(x))\big)\,dx
\end{align*}
and variance equal to
\begin{align}
&\frac1{n^2}\int_0^1\int_{\R}\Phi'(y)^2\mathbbm{1}\big(\bar g(x)\ge y \big)\lambda_g(x,y)\,dydx\nonumber\\
&=\frac1n\int_0^1\int_{\R}\Phi'(y)^2\mathbbm{1}\big(\bar g(x)\ge y\ge g(x)\big)\,dydx\label{EqVarianceDet},
\end{align}
provided the last integral is finite (see e.g. \cite[Lemma 1.1]{Kut98} or \cite[Theorem 4.4]{LastPenrose2016}). Thus,
\[ \hat F^{pseudo}_\Phi=\int_0^1\Phi(\bar g(x))\,dx-\frac1n\sum_{j\ge 1}\Phi'(Y_j)\mathbbm{1}\big(\bar g(X_j)\ge Y_j\big)
\]
forms an unbiased pseudo-estimator (relying on the knowledge of $\bar g$) of $\Funct(g)$ whose variance is given by \eqref{EqVarianceDet}. The closer $\bar g$ is to $g$, the smaller the variance. Concerning the rate results for $p$-th powers of $L^p$-norms in Table \ref{tab1} note that already the very minor knowledge of some upper bound of $g$ suffices to construct an estimator with convergence rate $n^{-1/2}$, which explains why in the PPP case even for $\beta\downarrow 0$ estimation and testing rates remain consistent.

The main idea is now to find a data-driven upper bound of $g$ which is as small as possible. A solution to this problem is given by
\begin{equation}\label{EqMLE}
\hat g^{MLE}(x)=\min_{k\ge 1}\big(Y_k+R|x-X_k|^{\beta}\big),\qquad x\in[0,1],
\end{equation}
which is the maximum-likelihood estimator over $\mathcal{C}^\beta(R)$ \cite[Section 3]{ReissSelk14}. Indeed, $\hat g^{MLE}$ is  an upper bound for $g$ noting that $Y_k\ge g(X_k)$ and $g(X_k)+R\abs{x-X_k}^\beta\ge g(x)$ for all $k\ge 1$, where the latter follows from $g\in\mathcal{C}^\beta(R)$.

The idea is now that the sum
\[
\frac1n\sum_{j\ge 1}\Phi'(Y_j)\mathbbm{1}\big(\hat{g}^{MLE}(X_j)\ge Y_j\big)
\]
is a.s. finite and satisfies
\begin{align*}
&\E \Big[\frac1n\sum_{j\ge 1}\Phi'(Y_j)\mathbbm{1}\big(\hat{g}^{MLE}(X_j)\ge Y_j\big)\Big]\\
&=\frac1n\int_0^1\int_{\R}\Phi'(y)\E\big[\mathbbm{1}\big(\hat{g}^{MLE}(x)\ge y\big)\big]\lambda_g(x,y)\,dydx\\
&=\int_0^1\E\big[\Phi(\hat{g}^{MLE}(x))\big]\,dx-\int_0^1\Phi(g(x))\,dx,
\end{align*}
provided that the integral in the second line is well-defined. For the first equality observe that
\[
\mathbbm{1}\big(\hat{g}^{MLE}(X_j)\ge Y_j\big)=\mathbbm{1}\Big(\min_{k\ge 1:k\neq j}\big(Y_k+R|X_j-X_k|^{\beta}\big)\ge Y_j\Big)
\]
where the term $j=k$ can be dropped. This implies that the observation $(X_j,Y_j)$ can be integrated out, by following the usual arguments for computing sums with respect to a Poisson process (see e.g. \cite[Theorem 4.4]{LastPenrose2016}). To summarise, we propose the following estimator
\begin{equation}\label{EqEstimator}
 \EstGen=\int_0^1\Phi(\hat{g}^{MLE}(x))\,dx-\frac{1}{n}\sum_{j\ge 1}\Phi'(Y_j)\mathbbm{1}\big(\hat{g}^{MLE}(X_j)\ge Y_j\big),
\end{equation}
which is indeed an unbiased estimator of $\Funct(g)$ under the appropriate integrability condition.

\begin{proposition}\label{PropUnbiased} Suppose that
\begin{equation}\label{EqCondUnbiased}
\int_0^1\int_{0}^\infty|\Phi'(g(x)+u)|\PP\big(\hat{g}^{MLE}(x)-g(x)\ge u\big)\,dudx<\infty.
\end{equation}
Then $\EstGen$ from \eqref{EqEstimator} is an unbiased estimator of $\Funct(g)$.
\end{proposition}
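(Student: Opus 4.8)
The plan is to verify $\E[\EstGen]=\Funct(g)$ by evaluating the expectations of the two summands in \eqref{EqEstimator} separately and observing that their difference telescopes to $\int_0^1\Phi(g(x))\,dx$. The integrability hypothesis \eqref{EqCondUnbiased} will be used throughout: it guarantees that the random sum in \eqref{EqEstimator} is a.s.\ absolutely convergent (so that $\EstGen$ is well defined and integrable), that $x\mapsto\E[\Phi(\hat{g}^{MLE}(x))]$ is integrable, and that each application of Fubini below is legitimate.

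First I would record the self-consistency already observed in the text: the $k=j$ term in the minimum defining $\hat{g}^{MLE}(X_j)$ equals $Y_j$, so $\hat{g}^{MLE}(X_j)\le Y_j$ always, and hence
\[
\mathbbm{1}\big(\hat{g}^{MLE}(X_j)\ge Y_j\big)=\mathbbm{1}\Big(\min_{k\ge 1:\,k\neq j}\big(Y_k+R|X_j-X_k|^{\beta}\big)\ge Y_j\Big),
\]
a quantity that depends on the point configuration only after $(X_j,Y_j)$ has been removed. This is exactly the structure required by the (multivariate) Mecke equation for Poisson processes, cf.\ \cite[Theorem 4.4]{LastPenrose2016}. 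Applying it first with $|\Phi'|$ in place of $\Phi'$ shows, using \eqref{EqCondUnbiased}, that the sum has finite expectation; applying it then to $\Phi'$ itself gives
\[
\E\Big[\frac1n\sum_{j\ge 1}\Phi'(Y_j)\mathbbm{1}\big(\hat{g}^{MLE}(X_j)\ge Y_j\big)\Big]=\frac1n\int_0^1\int_{\R}\Phi'(y)\,\PP\big(\hat{g}^{MLE}(x)\ge y\big)\,\lambda_g(x,y)\,dy\,dx,
\]
where on the right $\hat{g}^{MLE}$ is built from the original process, since adding the Mecke point back does not change the indicator at that point's own location, again by self-consistency. Inserting $\lambda_g(x,y)=n\mathbbm{1}(y\ge g(x))$ from \eqref{EqIntensity}, substituting $y=g(x)+u$, and using $\hat{g}^{MLE}(x)\ge g(x)$ a.s., the right-hand side equals $\int_0^1\int_0^\infty\Phi'(g(x)+u)\,\PP(\hat{g}^{MLE}(x)-g(x)\ge u)\,du\,dx$.

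For the other summand I would use the fundamental theorem of calculus for the weakly differentiable $\Phi$: since $\hat{g}^{MLE}(x)\ge g(x)$,
\[
\Phi(\hat{g}^{MLE}(x))-\Phi(g(x))=\int_{g(x)}^{\hat{g}^{MLE}(x)}\Phi'(v)\,dv=\int_0^\infty\Phi'(g(x)+u)\,\mathbbm{1}\big(u\le\hat{g}^{MLE}(x)-g(x)\big)\,du.
\]
Taking expectations and integrating over $x\in[0,1]$, with Fubini justified by \eqref{EqCondUnbiased}, yields
\[
\int_0^1\E\big[\Phi(\hat{g}^{MLE}(x))\big]\,dx-\Funct(g)=\int_0^1\int_0^\infty\Phi'(g(x)+u)\,\PP\big(\hat{g}^{MLE}(x)-g(x)\ge u\big)\,du\,dx,
\]
which is exactly the expression obtained above for the expectation of the random sum. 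Subtracting the two identities gives $\E[\EstGen]=\Funct(g)$, as desired.

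I expect the one delicate point to be the careful handling of the Mecke formula: checking joint measurability of the map sending a point and a configuration to the value of the maximum-likelihood estimator built from that configuration, and keeping the removal/addition of single points consistent throughout. The absolute convergence that makes all the interchanges valid is supplied verbatim by \eqref{EqCondUnbiased}; the remaining ingredients are only the fundamental theorem of calculus and Fubini.
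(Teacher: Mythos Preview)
Your proposal is correct and follows essentially the approach sketched in the paper: the text immediately preceding the proposition outlines exactly this argument (drop the $k=j$ term in the minimum by self-consistency, apply the Mecke-type formula from \cite[Theorem 4.4]{LastPenrose2016} to integrate out $(X_j,Y_j)$, then use the fundamental theorem of calculus to match the two summands), and your write-up makes the integrability and Fubini justifications via \eqref{EqCondUnbiased} explicit. The paper does not give a separate formal proof of this proposition; unbiasedness is re-established later inside the proof of Theorem~\ref{ThmGenRes} via the martingale identity $\E[(\Phi'\cdot M)_\infty]=0$, but that is under the stronger square-integrability hypothesis, so your direct argument is the appropriate one here.
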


\begin{remark}
The above argument can be worked out for more general functionals of the form $\int_0^1\dots\int_0^1\Phi(x_1,\dots,x_m,g(x_1),\dots,g(x_m))\,dx_1\dots dx_m$, but then involves complex expressions in mixed partial derivatives of $\Phi$. We therefore focus on estimation of the basic functional $\Funct$.
\end{remark}

\begin{remark}
For the one-sided regression model \eqref{EqRegression}  the discrete functional $\EstGen^{(n)}=(1/n)\sum_{i=1}^n\Phi(g(i/n))$ can be estimated analogously by
\[ \frac1n\sum_{i=1}^n\Phi(\hat g^{rMLE}(i/n))-\frac{1}{\alpha n}\sum_{i=1}^n\Phi'(Y_i)\mathbbm{1}(\hat g^{rMLE}(i/n)\ge Y_i)\]
with the regression analogue $\hat g^{rMLE}(x)=\min_i(Y_i+R\abs{x-i/n}^\beta)$ of $\hat g^{MLE}$. This estimator can be analysed with the martingale arguments of the next section, compare the results in \cite{ReissSelk14} for the linear case.
\end{remark}

\subsection{The martingale approach}
We pursue a martingale-based analysis of the estimator $\EstGen$ in \eqref{EqEstimator}. The following result extends \cite[Theorem 3.2]{ReissSelk14} to non-linear functionals.

\begin{thm}\label{ThmGenRes}  Suppose that the right-hand side in \eqref{EqPhiVar} below is finite. Then the estimator $\EstGen$ is UMVU over $g\in\mathcal{C}^\beta(R)$ with variance
\begin{equation}\label{EqPhiVar}
\operatorname{Var}(\EstGen)=\frac{1}{n}\int_0^1\int_{0}^\infty(\Phi'(g(x)+u))^2\PP\big(\hat{g}^{MLE}(x)-g(x)\ge u\big)\,dudx.
\end{equation}
\end{thm}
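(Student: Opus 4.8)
The plan is to establish the two assertions in turn: the variance identity \eqref{EqPhiVar} via a martingale representation of the estimation error, and the UMVU property by reducing to the sufficiency and completeness of $\hat g^{MLE}$ from \cite{ReissSelk14}. For the variance I would work under $\PP_g$ with the \emph{level filtration} $\mathcal F_u=\sigma\{(X_j,Y_j):Y_j-g(X_j)\le u\}$, $u\ge0$; in the coordinates $(x,u)$ with $u=y-g(x)$ the observed PPP becomes a homogeneous Poisson process on $[0,1]\times[0,\infty)$ with compensator $\bar N(dx,du)=n\,dx\,du$, and $(\mathcal F_u)$ is its natural history. Writing $\Phi(t)=\Phi(0)+\int_0^t\Phi'$ and using $\hat g^{MLE}\ge g$, one rewrites the error of \eqref{EqEstimator} as a Poisson integral,
\begin{align*}
\EstGen-\Funct(g)
&=\int_0^1\!\big(\Phi(\hat g^{MLE}(x))-\Phi(g(x))\big)\,dx-\frac1n\sum_{j\ge1}\Phi'(Y_j)\mathbbm{1}\big(\hat g^{MLE}(X_j)\ge Y_j\big)\\
&=-\frac1n\int_{[0,1]\times[0,\infty)}\!\Phi'(g(x)+u)\,\mathbbm{1}\big(\hat g^{MLE}(x)\ge g(x)+u\big)\,(N-\bar N)(dx,du),
\end{align*}
since $\tfrac1n\bar N$ is Lebesgue measure on $\{y\ge g(x)\}$ and $N=\sum_j\delta_{(X_j,Y_j)}$.

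The key point is that this integrand becomes predictable for $(\mathcal F_u)$ once the diagonal is removed. Replacing $\hat g^{MLE}$ by $\hat g^{MLE}_{-(x,u)}$, the MLE computed after deleting any point located at $(x,u)$, changes neither integral: against $\bar N$ it is an a.e.\ modification, against $N$ it is the identity $\hat g^{MLE}(X_j)=\min(Y_j,\hat g^{MLE}_{-j}(X_j))$ already exploited in Section \ref{SecEst}. Moreover the event $\{\hat g^{MLE}_{-(x,u)}(x)\ge g(x)+u\}$ requires every other point $k$ to satisfy $Y_k-g(X_k)\ge u-\delta_k$ with $\delta_k:=g(x)-g(X_k)+R|x-X_k|^\beta\in[0,2R|x-X_k|^\beta]$, hence depends only on points of excess strictly below $u$ and is $\mathcal F_{u-}$-measurable. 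Thus $M_u:=-\frac1n\int_{[0,1]\times[0,u]}\Phi'(g(x)+u')\mathbbm{1}(\hat g^{MLE}_{-(x,u')}(x)\ge g(x)+u')\,(N-\bar N)(dx,du')$ is a pure-jump martingale with $M_0=0$, $M_\infty=\EstGen-\Funct(g)$, and predictable quadratic variation $\langle M\rangle_\infty=\tfrac1{n^2}\int\Phi'(g(x)+u)^2\mathbbm{1}(\hat g^{MLE}(x)\ge g(x)+u)\,\bar N(dx,du)$. When the right-hand side of \eqref{EqPhiVar} is finite, $\E[\langle M\rangle_\infty]<\infty$, so $M$ is $L^2$-bounded; this re-proves $\E[\EstGen]=\Funct(g)$ and gives $\operatorname{Var}(\EstGen)=\E[M_\infty^2]=\E[\langle M\rangle_\infty]$, which upon inserting $\bar N(dx,du)=n\,dx\,du$ is exactly \eqref{EqPhiVar}.

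For the UMVU statement I would invoke that $\hat g^{MLE}$ is a complete sufficient statistic for $g\in\mathcal C^\beta(R)$ in this model, following \cite[Section 3]{ReissSelk14}: conditionally on $\hat g^{MLE}$ the remaining points form a PPP of intensity $n\mathbbm{1}(y>\hat g^{MLE}(x))$, whose law does not involve $g$ because $\hat g^{MLE}\ge g$, while the vertex configuration generating $\hat g^{MLE}$ is a function of the curve $\hat g^{MLE}$ itself. Since $x\mapsto\Phi(\hat g^{MLE}(x))$ and $\sum_j\Phi'(Y_j)\mathbbm{1}(\hat g^{MLE}(X_j)=Y_j)$ (where only vertices contribute) are measurable functions of $\hat g^{MLE}$, the estimator \eqref{EqEstimator} is a function of $\hat g^{MLE}$; being unbiased (Proposition \ref{PropUnbiased}, or by the martingale above), Lehmann--Scheff\'e yields that it is UMVU.

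The delicate part is the middle step: checking that the diagonal removal leaves both Poisson integrals unchanged, that the resulting integrand is genuinely predictable for $(\mathcal F_u)$, and that the weak-differentiability manipulation of $\Phi$ together with the a.s.\ finiteness of the sum in \eqref{EqEstimator} and of $\hat g^{MLE}$ are all legitimate under the sole hypothesis that the right-hand side of \eqref{EqPhiVar} is finite (which, incidentally, also entails \eqref{EqCondUnbiased}). Once this bookkeeping is in place, the variance identity follows from the Poisson stochastic-integral calculus and the UMVU property from the completeness argument of \cite{ReissSelk14}.
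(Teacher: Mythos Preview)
Your proof is correct and follows the same martingale strategy as the paper, but with a genuinely different technical implementation. The paper filters by the \emph{absolute} $y$-level: it sets $\mathcal F_t=\sigma((X_j,Y_j)\mathbbm{1}(Y_j\le t))$, reduces to the one-dimensional counting process $N_t=\sum_j\mathbbm{1}(Y_j\le t\wedge\hat g^{MLE}(X_j))$, imports from \cite{ReissSelk14} its compensator $\int_{t_0}^t\lambda_s\,ds$ with $\lambda_s=n\int_0^1\mathbbm{1}(g(x)\le s\le\hat g^{MLE}(x))\,dx$, integrates $\Phi'(s)$ against the martingale $M=N-A$, and only at the very end substitutes $s=g(x)+u$ to obtain \eqref{EqPhiVar}. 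You instead shift first to excess coordinates $u=y-g(x)$, filter by the excess level, and work directly with the two-dimensional compensated Poisson measure, making predictability of the integrand explicit through the leave-one-out device. Your route is more self-contained (no appeal to the compensator result of \cite{ReissSelk14}) and lands on \eqref{EqPhiVar} without a separate change of variables, at the price of a filtration that depends on the unknown $g$---harmless here since the variance is computed under a fixed $\PP_g$, but worth flagging. The UMVU part is identical in both proofs.

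One minor slip: in your inequality $Y_k-g(X_k)\ge u-\delta_k$ the correct value is $\delta_k=g(X_k)-g(x)+R|x-X_k|^\beta$, not $g(x)-g(X_k)+R|x-X_k|^\beta$; since $|g(x)-g(X_k)|\le R|x-X_k|^\beta$ either expression lies in $[0,2R|x-X_k|^\beta]$, so the crucial nonnegativity---and with it your predictability claim---survives.
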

\begin{remark}
If the right-hand side in \eqref{EqPhiVar} is finite, then Condition \eqref{EqCondUnbiased} holds since $\PP(\hat g^{MLE}(x)-g(x)\ge u)$ is integrable in $u$, see also \eqref{EqTails} below.
\end{remark}
\begin{proof}
We first show the formula for the variance.
Let $\lambda=(\lambda_t)$ be the process defined by $\lambda_t=n\int_0^1 \mathbbm{1}\big(g(x)\le t\le \hat{g}^{MLE}(x)\big)\,dx$, $t\in\R$.
Making a linear change of variables, the right-hand side in \eqref{EqPhiVar} can be written as
\begin{equation*}
n^{-2}\E\Big[\int_{t_0}^\infty\Phi'(s)^2\lambda_s\,ds\Big],
\end{equation*}
where $t_0$ is a lower bound for $g$.
In the proof of Theorem 3.2 in \cite{ReissSelk14}, it is shown that the pure counting process $N=(N_t)$ defined by
\[
N_t=\sum_{j\ge 1}\mathbbm{1}\big(Y_j\le t\wedge  \hat{g}^{MLE}(X_j)\big),\quad t\ge t_0,
\]
has compensator $A=(A_t)$ given by $A_t=\int_{t_0}^t\lambda_s\,ds$
and that $M=N-A$ is a square-integrable martingale with respect to the filtration ${\mathcal F}_t=\sigma((X_j,Y_j)\mathbbm{1}(Y_j\le t), j\ge 1)$. Its predictable quadratic variation is
\[
\langle M \rangle_t=\int_{t_0}^t\lambda_s\,ds
\]
(see also \cite[Proposition 2.32]{Karr91}). We conclude (e.g. via \cite[Theorem 26.2]{Kallenberg02}) that
\[
(\Phi'\cdot M)_t=\int_{t_0}^t\Phi'(s)\,dM_s=\sum_{j\ge 1}\Phi'(Y_j)\mathbbm{1}\big(Y_j\le t\wedge  \hat{g}^{MLE}(X_j)\big)-\int_{t_0}^t\Phi'(s)\lambda_s\,ds
\]
is an $L^2$-bounded martingale with
\[
\langle \Phi'\cdot M \rangle_t=\int_{t_0}^t\Phi'(s)^2\lambda_s\,ds,
\]
noting that $\E[\langle \Phi'\cdot M \rangle_t]$ is bounded by the right-hand side in \eqref{EqPhiVar}, which is finite by assumption.
For $t\to\infty$ the process $((\Phi'\cdot M)_t)$ converges almost surely to
\begin{align*}
&(\Phi'\cdot M)_\infty\\
&=\sum_{j\ge 1}\Phi'(Y_j)\mathbbm{1}\big(Y_j\le  \hat{g}^{MLE}(X_j)\big)-\int_{t_0}^\infty\Phi'(s)\lambda_s\,ds\\
&=\sum_{j\ge 1}\Phi'(Y_j)\mathbbm{1}\big(Y_j\le  \hat{g}^{MLE}(X_j)\big)-n\int_0^1\Phi(\hat{g}^{MLE}(x))\,dx+n\int_0^1\Phi(g(x))\,dx.
\end{align*}
Moreover, the process $ (\langle \Phi'\cdot M \rangle_t)$ converges almost surely and in $L^1$ to
\[
\langle \Phi'\cdot M \rangle_\infty=\int_{t_0}^\infty\Phi'(s)^2\lambda_s\,ds.
\]
Hence, unbiasedness and \eqref{EqPhiVar} follow from
\begin{equation}\label{EqMartingaleForm}
\E[(\Phi'\cdot M)_\infty]=0\ \ \text{ and }\ \ \E[(\Phi'\cdot M)_\infty^2-\langle \Phi'\cdot M \rangle_\infty]=0,
\end{equation}
which holds due to the $L^2$-convergence of $\Phi'\cdot M$ \cite[Corollary 6.22]{Kallenberg02}.

Finally, the fact that $\EstGen$ is UMVU follows from the Lehmann-Scheffé theorem and \cite[Proposition 3.1]{ReissSelk14} which says that $(\hat{g}^{MLE}(x):x\in[0,1])$ is a sufficient and complete statistic for $\mathcal{C}^\beta(R)$.
\end{proof}

\subsection{Rates of convergence}

In this section, we derive convergence rates for the estimator $\EstGen$. Using the argument leading to \cite[Equation (3.3)]{ReissSelk14}, we have the following deviation inequality for $x\in[0,1]$:
\begin{equation}\label{EqTails}
\mathbb{P}\big(\hat{g}^{MLE}(x)-g(x)\ge u\big)\le
\begin{cases}
\exp\big(-\frac{n\beta(2R)^{-\frac{1}{\beta}}u^{\frac{\beta+1}{\beta}}}{\beta+1}\big),& \text{if }u\in[0,2R],\\
\exp\big(-n\big(u-\frac{2R}{\beta+1}\big)\big),&\text{if }u>2R.
\end{cases}
\end{equation}
Thus, the right-hand side in \eqref{EqPhiVar} is finite if $(\Phi')^2$ has at most exponential growth with parameter strictly smaller than $n$. In particular, this holds for $\Phi(u)=|u|^p$, $p\ge 1$, in which case we have $\Phi'(u)=p|u|^{p-1}\operatorname{sgn}(u)$. A more detailed analysis gives:

\begin{corollary}\label{CorPowInt} Let $p\ge 1$ be a real number and consider $\Phi(u)=|u|^p$, $g\in\mathcal{C}^\beta(R)$. Then
\begin{equation}\label{EqFhatLp} \EstNorm=\int_0^1|\hat{g}^{MLE}(x)|^p\,dx-\frac{1}{n}\sum_{j\ge 1}p|Y_j|^{p-1}\sgn(Y_j)\mathbbm{1}\big(\hat{g}^{MLE}(X_j)\ge Y_j\big)
\end{equation}
is an unbiased estimator of $\| g\|_p^p$ with
\begin{equation}\label{EqPowInt}
\mathbb{E}\big[(\EstNorm-\| g\|_p^p)^2\big]\le C\max\Big(\|g\|_{2p-2}^{2p-2}n^{-\frac{2\beta+1}{\beta+1}},n^{-\frac{2\beta p+1}{\beta+1}}\Big),
\end{equation}
where $C$ is a constant depending only on $R$, $\beta$ and $p$. Here, we use the notation $\|\cdot\|_q$ also for $q<1$ with $\|g\|_{0}^{0}:=1$.
\end{corollary}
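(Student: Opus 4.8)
\emph{Proof idea.} The plan is to read off the exact variance from Theorem~\ref{ThmGenRes} and then estimate the resulting integral with the tail bound~\eqref{EqTails}. For $\Phi(u)=|u|^p$ one has $\Phi'(u)=p|u|^{p-1}\sgn(u)$ for a.e.\ $u$, so $(\Phi'(g(x)+u))^2=p^2|g(x)+u|^{2p-2}$, which grows only polynomially in $u$. Since $\PP(\hat{g}^{MLE}(x)-g(x)\ge u)$ decays super-exponentially by~\eqref{EqTails}, the right-hand side of~\eqref{EqPhiVar} is finite; hence Condition~\eqref{EqCondUnbiased} is met (cf.\ the remark after Theorem~\ref{ThmGenRes}), Proposition~\ref{PropUnbiased} gives that $\EstNorm$ in~\eqref{EqFhatLp} is unbiased for $\norm{g}_p^p$, and Theorem~\ref{ThmGenRes} yields
\[
\E\big[(\EstNorm-\norm{g}_p^p)^2\big]=\operatorname{Var}(\EstNorm)=\frac{p^2}{n}\int_0^1\int_0^\infty|g(x)+u|^{2p-2}\,\PP\big(\hat{g}^{MLE}(x)-g(x)\ge u\big)\,du\,dx.
\]

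Next I would split the integrand. Because $2p-2\ge0$, the elementary inequality $(a+b)^{2p-2}\le c_p(a^{2p-2}+b^{2p-2})$ for $a,b\ge0$ (with $c_p=1$ if $2p-2\le1$ and $c_p=2^{2p-3}$ if $2p-2>1$) gives $|g(x)+u|^{2p-2}\le c_p(|g(x)|^{2p-2}+u^{2p-2})$, so $\operatorname{Var}(\EstNorm)\le c_pp^2(I_1+I_2)/n$ with
\[
I_1=\int_0^1|g(x)|^{2p-2}\Big(\int_0^\infty\PP\big(\hat{g}^{MLE}(x)-g(x)\ge u\big)\,du\Big)dx,\qquad I_2=\int_0^1\int_0^\infty u^{2p-2}\,\PP\big(\hat{g}^{MLE}(x)-g(x)\ge u\big)\,du\,dx.
\]
The key computation is then a scalar bound: for any $q\ge0$ and $x\in[0,1]$, split the $u$-integral at $2R$; on $[0,2R]$ use the first branch of~\eqref{EqTails}, where the substitution $u=n^{-\beta/(\beta+1)}v$ turns the exponent into $-\tfrac{\beta}{\beta+1}(2R)^{-1/\beta}v^{(\beta+1)/\beta}$, whence
\[
\int_0^{2R}u^q\,\PP\big(\hat{g}^{MLE}(x)-g(x)\ge u\big)\,du\le n^{-(q+1)\beta/(\beta+1)}\int_0^\infty v^q\exp\!\Big(-\tfrac{\beta}{\beta+1}(2R)^{-1/\beta}v^{(\beta+1)/\beta}\Big)dv,
\]
the last integral being a finite constant $C_{q,\beta,R}$ by the super-exponential decay of the integrand (the exponent $(\beta+1)/\beta>1$); on $(2R,\infty)$ the second branch of~\eqref{EqTails} contributes only a term of order $e^{-cn}$ with $c=c(R,\beta)>0$, negligible against any polynomial rate in $n$. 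Both bounds are uniform in $x$.

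Finally I would plug in $q=0$ and $q=2p-2$. With $q=0$ the inner integral in $I_1$ is $O(n^{-\beta/(\beta+1)})$ uniformly in $x$, so $c_pp^2I_1/n\le C\norm{g}_{2p-2}^{2p-2}\,n^{-1-\beta/(\beta+1)}=C\norm{g}_{2p-2}^{2p-2}\,n^{-(2\beta+1)/(\beta+1)}$, where for $p=1$ one reads $\norm{g}_0^0:=1$, consistent with $|g(x)|^{0}\equiv1$ and $\int_0^1|g(x)|^{2p-2}\,dx=\norm{g}_{2p-2}^{2p-2}$. With $q=2p-2$ one gets $I_2=O(n^{-(2p-1)\beta/(\beta+1)})$, so $c_pp^2I_2/n\le C\,n^{-1-(2p-1)\beta/(\beta+1)}=C\,n^{-(2p\beta+1)/(\beta+1)}$, using $\beta+1+(2p-1)\beta=2p\beta+1$. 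Bounding $I_1+I_2$ by twice its larger term yields~\eqref{EqPowInt} with a constant depending only on $R,\beta,p$. The only mildly delicate points are the two case distinctions (the exponent ranges in~\eqref{EqTails} and the subadditivity versus convexity inequality for $(a+b)^{2p-2}$) and the finiteness of the one-dimensional integrals $\int_0^\infty v^q e^{-cv^{(\beta+1)/\beta}}\,dv$; there is no substantial obstacle, since everything reduces to the exact variance formula of Theorem~\ref{ThmGenRes} and the already established tail estimate.
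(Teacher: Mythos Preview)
Your proposal is correct and follows essentially the same route as the paper: invoke the exact variance formula from Theorem~\ref{ThmGenRes}, split $|g(x)+u|^{2p-2}$ into the $|g(x)|^{2p-2}$ and $u^{2p-2}$ contributions via the power inequality, and then bound the resulting $u$-integrals using the tail estimate~\eqref{EqTails} together with a substitution that extracts the powers of $n$. The only cosmetic differences are the order in which you split (by term first, then by $u$-range, whereas the paper does the reverse) and your slightly sharper constant $c_p$ in the subadditivity/convexity inequality; neither affects the argument.
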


\begin{remark}
In the proof, a more precise upper bound is derived in which the dependence on the constant $R$ is explicit, see \eqref{EqBinomFormula}. For an asymptotically more precise result see Corollary \ref{CorFrechet} below.
\end{remark}

\begin{remark}\label{RemPosPart}
Since $\Phi(u)=|u|^p$ is non-negative, the positive part $(\EstNorm)_+$ of $\EstNorm$ always improves the estimator. This means that $\EstNorm$ is not an admissible estimator in the decision-theoretic sense, while $(\EstNorm)_+$ on the other hand is no longer unbiased.
\end{remark}

\begin{proof}
Throughout the proof $C>0$ denotes a constant depending only on $\beta$ and $p$ that may change from line to line.
By Theorem \ref{ThmGenRes} and the discussion above, we have
\[
\mathbb{E}\big[(\EstNorm-\| g\|_p^p)^2\big]=\frac{1}{n}\int_0^1\int_0^\infty p^2|u+g(x)|^{2p-2}\mathbb{P}\big(\hat{g}^{MLE}(x)-g(x)\ge u\big)\,dudx.
\]
Applying \eqref{EqTails} and the inequality $|u+g(x)|^{2p-2}\le 2^{2p-2}(u^{2p-2}+|g(x)|^{2p-2})$, the last term is bounded by
\begin{align*}
&\frac{p^22^{2p-2}}{n}\int_0^{2R}(\|g\|_{2p-2}^{2p-2}+u^{2p-2})\exp\Big(-\frac{n\beta(2R)^{-\frac{1}{\beta}}u^{\frac{\beta+1}{\beta}}}{\beta+1}\Big)\,du\\
&+\frac{p^22^{2p-2}}{n}\int_{2R}^\infty (\|g\|_{2p-2}^{2p-2}+u^{2p-2})\exp\Big(-n\Big(u-\frac{2R}{\beta+1}\Big)\Big)\,du=:(I)+(II).
\end{align*}
By a linear substitution, we have for $q\ge 0$
\begin{align}
&\int_0^{2R}u^q\exp\Big(-\frac{n\beta(2R)^{-\frac{1}{\beta}}u^{\frac{\beta+1}{\beta}}}{\beta+1}\Big)\,du\nonumber\\
&\le\Big(\frac{\beta+1}{\beta}\Big)^{\frac{\beta(q+1)}{\beta+1}}(2R)^{\frac{q+1}{\beta+1}}n^{-\frac{\beta(q+1)}{\beta+1}}\int_0^{\infty}v^{q}\exp\big(-v^{\frac{\beta+1}{\beta}}\big)\,dv\nonumber\\
&= \Big(\frac{\beta+1}{\beta}\Big)^{\frac{\beta q-1}{\beta+1}}(2R)^{\frac{q+1}{\beta+1}}\Gamma\Big(\frac{\beta(q+1)}{\beta+1}\Big)n^{-\frac{\beta(q+1)}{\beta+1}}\label{EqPowIntAsympLimit}
\end{align}
with the Gamma function $\Gamma$. Consequently,
\[
(I)\le  CR^{\frac{1}{\beta+1}}\|g\|_{2p-2}^{2p-2}n^{-\frac{2\beta+1}{\beta+1}}+CR^{\frac{2p-1}{\beta+1}}n^{-\frac{2\beta p+1}{\beta+1}}.
\]
Next, consider the remainder term $(II)$. We have
\[
\int_{2R}^\infty \exp\Big(-n\Big(u-\frac{2R}{\beta+1}\Big)\Big)\,du= n^{-1}e^{-\frac{2\beta Rn}{\beta+1}}
\]
and
\begin{align*}
&\int_{2R}^\infty u^{2p-2}\exp\Big(-n\Big(u-\frac{2R}{\beta+1}\Big)\Big)\,du\le \int_{2R}^\infty u^{2p-2}\exp\Big(-\frac{n\beta u}{\beta+1}\Big)\,du\\
&\le  \Big(\frac{\beta+1}{n\beta}\Big)^{2p-1}\int_{2\beta Rn/(\beta+1)}^\infty v^{2p-2}\exp(-v)\,dv\le  C n^{-2p+1}e^{-\frac{\beta Rn}{\beta+1}}.
\end{align*}
Note that the last integral can be computed using partial integration. Thus
\[
(II)\le  C\|g\|_{2p-2}^{2p-2}n^{-2}e^{-\frac{2\beta Rn}{\beta+1}}+Cn^{-2p}e^{-\frac{\beta Rn}{\beta+1}}.
\]
Summarising, we have
\begin{align}
\mathbb{E}\big[(\EstNorm-\| g\|_p^p)^2\big]&\le  CR^{\frac{1}{\beta+1}}\|g\|_{2p-2}^{2p-2}n^{-\frac{2\beta+1}{\beta+1}}+CR^{\frac{2p-1}{\beta+1}}n^{-\frac{2\beta p+1}{\beta+1}}\nonumber\\
&+C\|g\|_{2p-2}^{2p-2}n^{-2}e^{-\frac{2\beta Rn}{\beta+1}}+Cn^{-2p}e^{-\frac{\beta Rn}{\beta+1}},\label{EqBinomFormula}
\end{align}
and the claim follows.
\end{proof}

One might wonder whether $\EstNorm$ achieves the rate $n^{-(\beta+1/2)/(\beta+1)}$ uniformly over $g\in{\mathcal C}^\beta(R)\cap B_p(R)$ with the $L^p$-ball $B_p(R)=\{g\in L^p([0,1]):\norm{g}_p\le R\}$. For $1\le p\le 2$ this follows from the inclusion $B_p(R)\subseteq B_{2p-2}(R)$. For $p>2$ this holds as well and is a consequence of
the following useful Lemma (with $q=2p-2$) providing a simple interpolation result. Results of this type are well known (cf. Bergh and L\"ofstr\"om \cite{BL76}), but since only H\"older semi-norms appear, we provide a self-contained proof in the appendix.

\begin{lemma}\label{LemNormComp} Let $1\le p\le q\le \infty$ and $f\in \mathcal{C}^\beta(R)$. Then we have
\begin{equation*}
\|f\|_q\le C\|f\|_p\max(1,R/\|f\|_{p})^{\frac{1/p-1/q}{\beta+1/p}},
\end{equation*}
where $C>0$ is a constant depending only on $\beta$, $p$ and $q$ and the right-hand side is understood to be zero for $f=0$.
\end{lemma}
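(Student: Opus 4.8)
The plan is to prove the interpolation bound $\|f\|_q \le C\|f\|_p \max(1, R/\|f\|_p)^{(1/p-1/q)/(\beta+1/p)}$ by localising: on a small interval where $|f|$ is large, H\"older continuity forces $|f|$ to stay comparably large on a whole neighbourhood, which in turn forces $\|f\|_p$ to be large; conversely, if $\|f\|_p$ is small, then $\|f\|_\infty$ (hence $\|f\|_q$) cannot be too large.

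\begin{proof}[Proof sketch]
\textbf{Reduction to the sup-norm.} Since $\|f\|_q \le \|f\|_\infty^{1-p/q}\|f\|_p^{p/q}$ for $p \le q \le \infty$ by the standard log-convexity of $L^r$-norms (with the convention that the exponent $1-p/q$ equals $1$ when $q=\infty$), it suffices to establish the pointwise-to-$L^p$ comparison
\[
\|f\|_\infty \le C\,\|f\|_p \max\!\big(1, R/\|f\|_p\big)^{\frac{1/p}{\beta+1/p}},
\]
because plugging this into the log-convexity bound and simplifying the exponents $(1-p/q)\cdot\frac{1/p}{\beta+1/p} = \frac{1/p-1/q}{\beta+1/p}$ yields exactly the claim (here one also uses $\max(1,t)^a \le \max(1,t)$-type monotonicity to absorb constants; the case $f=0$ is trivial and excluded).

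\textbf{The key estimate via H\"older continuity.} Fix $x_0 \in [0,1]$ and set $m = |f(x_0)|$; by continuity we may assume $m$ is close to $\|f\|_\infty$. For $|x-x_0| \le \delta$ we have $|f(x)| \ge m - R\delta^\beta$ by the H\"older condition $f \in \mathcal{C}^\beta(R)$. Choosing $\delta$ so that $R\delta^\beta = m/2$, i.e. $\delta = (m/(2R))^{1/\beta}$, gives $|f(x)| \ge m/2$ on an interval of length at least $\min(\delta,1/2)$ intersected with $[0,1]$; a slightly careful treatment near the endpoints still yields length $\gtrsim \min(\delta,1)$. Hence
\[
\|f\|_p^p \ge c\,(m/2)^p \min(\delta, 1) = c\,(m/2)^p \min\!\Big(\big(m/(2R)\big)^{1/\beta}, 1\Big).
\]
Now distinguish two cases. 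If $\delta \ge 1$, i.e. $m \ge 2R$, then $\|f\|_p^p \gtrsim m^p$, so $\|f\|_\infty \lesssim \|f\|_p$ and the bound holds with the $\max(\cdot)$-factor equal to $1$ (indeed $\|f\|_p \ge c m \ge 2cR$ forces $R/\|f\|_p \lesssim 1$). If $\delta < 1$, then $\|f\|_p^p \gtrsim m^{p} \cdot m^{1/\beta} R^{-1/\beta} = m^{p+1/\beta} R^{-1/\beta}$, so $m \lesssim R^{\frac{1/\beta}{p+1/\beta}} \|f\|_p^{\frac{p}{p+1/\beta}} = R^{\frac{1/p}{\beta+1/p}}\,\|f\|_p^{\frac{\beta}{\beta+1/p}}$ after multiplying numerator and denominator of the exponents by $\beta/p$. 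Writing this as $m \lesssim \|f\|_p (R/\|f\|_p)^{(1/p)/(\beta+1/p)}$ gives the required sup-norm bound, and in this regime one checks $R/\|f\|_p \gtrsim 1$ so the $\max$ is active.

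\textbf{Main obstacle.} The only delicate point is the boundary effect: when $x_0$ is within $\delta$ of $0$ or $1$, the interval $\{|x - x_0| \le \delta\} \cap [0,1]$ can have length as small as $\delta$ rather than $2\delta$, but never smaller, so the constant $c$ in the lower bound on $\|f\|_p^p$ only worsens by a fixed factor and the argument is unaffected. Collecting the two cases and tracking that all implicit constants depend only on $\beta$, $p$ (and then $q$ after the log-convexity step) completes the proof.
\end{proof}
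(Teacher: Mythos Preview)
Your proof is correct and follows essentially the same approach as the paper's. Both arguments first reduce to the case $q=\infty$ via the inequality $\|f\|_q^q\le\|f\|_\infty^{q-p}\|f\|_p^p$, and then establish the sup-norm bound by exploiting H\"older continuity to lower-bound $\|f\|_p$ in terms of $\|f\|_\infty$: where $|f|$ is near its maximum it must stay comparably large on an interval whose length is governed by $(\|f\|_\infty/R)^{1/\beta}$. The only cosmetic difference is that the paper integrates the pointwise lower bound $(\|f\|_\infty-Rx^\beta)_+^p$ directly (with the boundary effect absorbed into a $\min(1,b)$ factor), whereas you use the cruder device $|f|\ge m/2$ on an interval of length $\gtrsim\min(\delta,1)$ and then split into the two cases $\delta\ge 1$ and $\delta<1$; the resulting constants differ but the structure and all exponents are identical.
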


Let us come to another corollary of Theorem \ref{ThmGenRes} which provides a local asymptotic upper bound for the minimax risk under weak assumptions on the functional:

\begin{corollary}\label{CorFrechet} Suppose that there is a constant $C>0$ such that $|\Phi'(u)|\le C\exp(C|u|)$ for all $u\in\R$. Let $f\in \mathcal{C}^\beta(R)$. Suppose that $\norm{\Phi'\circ f}_2\neq 0$ and that the map $F':\mathcal{C}^\beta(R)\subset L^2([0,1])\rightarrow L^2([0,1])$, $F'(g)= \Phi'\circ g$ is continuous at $g=f$ with respect to the $L^2$-norms. Then the estimator $ \EstAsymp=\EstGen$ satisfies the local asymptotic upper bound
\begin{equation*}
\lim_{\delta\rightarrow 0}\limsup_{n\rightarrow\infty}\sup_{\substack{g\in\mathcal{C}^\beta(R):\\ \|f-g\|_2\le \delta}}n^{\frac{2\beta+1}{\beta+1}}\mathbb{E}_g\big[(\EstAsymp-\Funct(g))^2\big]\le \Gamma\big(\tfrac{\beta}{\beta+1}\big)\big(\tfrac{2R\beta}{\beta+1}\big)^{\frac{1}{\beta+1}}\|\Phi'\circ f\|_2^2
\end{equation*}
with the Gamma function $\Gamma$.
\end{corollary}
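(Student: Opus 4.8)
The plan is to turn the statement into an asymptotic analysis of the exact variance formula \eqref{EqPhiVar}. Since $|\Phi'(u)|\le C\exp(C|u|)$ makes $(\Phi')^2$ grow with parameter $2C$, the tail bound \eqref{EqTails} shows that the right-hand side of \eqref{EqPhiVar} is finite for every $n>2C$; for such $n$ Theorem \ref{ThmGenRes} applies and, $\EstGen$ being unbiased,
\[
n^{\frac{2\beta+1}{\beta+1}}\E_g\big[(\EstAsymp-\Funct(g))^2\big]=n^{\frac{\beta}{\beta+1}}\int_0^1\int_0^\infty(\Phi'(g(x)+u))^2\,\PP\big(\hat{g}^{MLE}(x)-g(x)\ge u\big)\,du\,dx .
\]
It therefore suffices to show that $\lim_{\delta\to0}\limsup_{n\to\infty}\sup_{\|f-g\|_2\le\delta}$ of the right-hand side does not exceed $\Gamma(\tfrac{\beta}{\beta+1})(\tfrac{2R\beta}{\beta+1})^{1/(\beta+1)}\|\Phi'\circ f\|_2^2$.

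Next I substitute $u=n^{-\beta/(\beta+1)}v$, which turns the first branch of \eqref{EqTails} into the $n$-free bound $\PP(\hat{g}^{MLE}(x)-g(x)\ge n^{-\beta/(\beta+1)}v)\le\rho(v):=\exp\big(-\tfrac{\beta(2R)^{-1/\beta}}{\beta+1}v^{(\beta+1)/\beta}\big)$ provided $n^{-\beta/(\beta+1)}v\le 2R$, and I split the $v$-integral at a large threshold $V$. The Hölder property gives the uniform bound $\|g\|_\infty\le\|g\|_2+R\le\|f\|_2+R+1=:M_0$ for all $g$ with $\|f-g\|_2\le\delta\le1$, whence $(\Phi'(g(x)+u))^2\le C^2e^{2CM_0}e^{2Cu}$. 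Inserting this into the second branch of \eqref{EqTails} shows that the contribution of $\{u>2R\}$, multiplied by $n^{\beta/(\beta+1)}$, is $O\big(n^{\beta/(\beta+1)}e^{-2R\beta n/(\beta+1)}\big)\to0$, uniformly over such $g$; and on $\{n^{-\beta/(\beta+1)}V<u\le2R\}$, where $e^{2Cu}\le e^{4CR}$, the corresponding multiplied contribution is at most $C'\int_V^\infty\rho(v)\,dv$, uniformly in $n$ and $g$. Since $\rho$ is integrable, both bounds are negligible once $V$ is large (the first already for each fixed $V$ as $n\to\infty$), so only the region $v\in[0,V]$ matters.

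On this region, and for $n$ so large that $n^{-\beta/(\beta+1)}V\le2R$, the first branch of \eqref{EqTails} gives $\PP\le\rho(v)$, so by Tonelli the main-region term is at most $\int_0^V\rho(v)\big(\int_0^1(\Phi'(g(x)+n^{-\beta/(\beta+1)}v))^2\,dx\big)\,dv$. The inner average is controlled using that $\mathcal{C}^\beta(R)$ is invariant under constant shifts: $g+n^{-\beta/(\beta+1)}v\in\mathcal{C}^\beta(R)$ with $\|(g+n^{-\beta/(\beta+1)}v)-f\|_2\le\delta+n^{-\beta/(\beta+1)}V$. Given $\eps>0$, the assumed $L^2$-continuity of $F'$ at $f$ yields $\delta_0>0$ with $\|\Phi'\circ h-\Phi'\circ f\|_2<\eps$ whenever $h\in\mathcal{C}^\beta(R)$ and $\|h-f\|_2<\delta_0$; choosing $\delta<\delta_0/2$ and then $n$ large enough that also $n^{-\beta/(\beta+1)}V<\delta_0/2$, the inner average is $\le(\|\Phi'\circ f\|_2+\eps)^2$ for all $v\le V$ and all admissible $g$. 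Finally $\int_0^V\rho\le\int_0^\infty\rho=\Gamma(\tfrac{\beta}{\beta+1})(\tfrac{2R\beta}{\beta+1})^{1/(\beta+1)}$ by the same elementary substitution used in the derivation of \eqref{EqPowIntAsympLimit} (with $q=0$). Combining the three regions and letting $n\to\infty$, then $V\to\infty$ and $\delta\to0$, and finally $\eps\to0$ gives the claim.

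The main obstacle is the weak regularity of $\Phi$: since $\Phi'$ lies only in $L^1_{loc}$, one cannot send $u\to0$ pointwise inside the integrand, and the substitute — rewriting $\int_0^1(\Phi'(g(x)+u))^2dx=\|\Phi'\circ(g+u)\|_2^2$ and invoking the hypothesis on the Nemytskii-type map $F'$ — must be arranged so that the resulting $\eps$ is uniform both over the shift $v\in[0,V]$ and over the entire $L^2$-ball of curves $g$, which is exactly what forces the particular order of the limits. Everything else (the three-region decomposition, the exponential tail estimates from \eqref{EqTails}, the Gamma integral) is routine.
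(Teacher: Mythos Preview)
Your proof is correct and follows essentially the same route as the paper's: start from the exact variance formula \eqref{EqPhiVar}, insert the tail bound \eqref{EqTails}, split off the far tail where the exponential decay beats the at-most-exponential growth of $(\Phi')^2$, and on the remaining small-$u$ region use the $L^2$-continuity of $F'$ at $f$ together with the $q=0$ case of \eqref{EqPowIntAsympLimit} to produce the Gamma constant. The only cosmetic differences are that the paper cuts directly at a small $\delta'$ in the $u$-variable (rather than substituting $u=n^{-\beta/(\beta+1)}v$ and cutting at $V$) and obtains the uniform sup-norm bound on $g$ via Lemma~\ref{LemNormComp}, whereas you use the more elementary inequality $\|g\|_\infty\le\|g\|_2+R$; neither difference is substantive.
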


\begin{proof}
By Theorem \ref{ThmGenRes} and Equation \eqref{EqTails}, we have
\begin{align*}
\mathbb{E}_g\big[(\EstGen-\Funct(g))^2\big]
&\le \frac{1}{n}\int_0^{2R}\|\Phi'\circ(u+g)\|_2^2\exp\Big(-\frac{n\beta(2R)^{-\frac{1}{\beta}}u^{\frac{\beta+1}{\beta}}}{\beta+1}\Big)\,du\\
&+\frac{1}{n}\int_{2R}^\infty\|\Phi'\circ(u+g)\|_2^2\exp\Big(-\frac{n\beta u}{\beta+1}\Big)\,du.
\end{align*}
By Lemma \ref{LemNormComp}, applied to $f-g$ and with $p=2$, $q=\infty$, we infer from $g\in\mathcal{C}^\beta(R)$ with $\|f-g\|_2\le \delta$ that
\begin{equation}\label{EqSupBound}
\|f-g\|_\infty\le C'R^{1/(2\beta+1)}\delta^{2\beta/(2\beta+1)}
\end{equation}
holds with some constant $C'$, provided that $\delta\le R$.
Using that $\Phi'$ has at most exponential growth, we get that $\|\Phi'\circ(u+g)\|_2^2\le C\exp(C|u|)$ uniformly over all $g\in\mathcal{C}^\beta(R)$ with $\|f-g\|_2\le \delta$ (adjusting $C$ appropriately). This shows that the second term is of smaller order than $n^{-2}$ and thus asymptotically negligible for our result. Similarly, for every fixed $\delta'>0$ the first integral from $\delta'$ to $2R$ becomes exponentially small in $n$.
Thus, for any $\delta'>0$ the left-hand side in Corollary \ref{CorFrechet} is bounded by
\begin{equation}\label{EqCorFrechetPr}
\lim_{\delta\to 0}\limsup_{n\rightarrow\infty}\sup_{\substack{g\in\mathcal{C}^\beta(R):\\ \|f-g\|_2\le \delta}}n^{\frac{\beta}{\beta+1}} \int_0^{\delta'}\|\Phi'\circ(u+g)\|_2^2\exp\Big(-\frac{n\beta(2R)^{-\frac{1}{\beta}}
u^{\frac{\beta+1}{\beta}}}{\beta+1}\Big)\,du.
\end{equation}
By the continuity of $\Funct'$ at $f$ and the fact that $\norm{\Phi'\circ f}_2\neq 0$, for every $\eps>0$ there exist $\delta,\delta'>0$ such that $\|\Phi'\circ(u+g)\|_2\le (1+\eps)\|\Phi'\circ f\|_2$ for all $|u|\le \delta'$ and $g\in\mathcal{C}^\beta(R)$ with $\|f-g\|_2\le \delta$. We conclude that \eqref{EqCorFrechetPr} is bounded by (using the computation in \eqref{EqPowIntAsympLimit} for $q=0$)
\[
\Gamma\Big(\frac{\beta}{\beta+1}\Big)\Big(\frac{2R\beta}{\beta+1}\Big)^{\frac{1}{\beta+1}}\|\Phi'\circ f\|_2^2,
\]
and the claim follows.
\end{proof}

\begin{remark}
By Lemma \ref{LemNormComp} continuity of $\Funct'$ on $\mathcal{C}^\beta(R)$ with respect to $L^2$-norm implies continuity with respect to supremum norm. Under the assumptions of Corollary \ref{CorFrechet}, one can indeed show that the functional $\Funct$ is Fr\'{e}chet-differentiable in $f$ along $\mathcal{C}^\beta(R)$ with derivative $\Funct'(f)=\Phi'\circ f$.
\end{remark}

\begin{remark}
Local asymptotic minimax results for estimating smooth functionals in the GWN model can be found in Nemirovski \cite[Chapter 7]{Nem00}. The rate is different (see the discussion in the introduction), but the term $\|\Funct'(g)\|_2^2$ appears as well. The latter fact can be explained by linearising $\Funct$ at~$g$.
\end{remark}

\begin{remark}
The estimators are non-adaptive in the sense that they rely on the knowledge of the regularity parameters $\beta$ and $R$. In \cite{ReissSelk14} the Lepski method has been employed to construct adaptive estimators in the linear case, based on a blockwise estimator. We conjecture that this approach would also give an adaptive rate-optimal estimator here. Note also the restriction $\beta\le 1$ on the regularity parameter. The  reason is that the MLE for $g\in C^\beta(R)$ with $\beta>1$ does not necessarily provide a pointwise upper bound for $g$ such that the present approach may fail. 
\end{remark}

\subsection{Lower bounds}

In this section we establish lower bounds corresponding to Corollaries \ref{CorPowInt} and \ref{CorFrechet}. We will apply the method of two fuzzy hypotheses (see \cite[Chapter 2.7.4]{Tsyb09}) with a prior corresponding to independent non-identical Bernoulli random variables. Our main result states a local asymptotic lower bound in the case that $\Phi$ is continuously differentiable. Possible extensions are discussed afterwards.

\begin{thm}\label{PropLowBoundDiff}
Let $\Phi$ be continuously differentiable and $f\in \mathcal{C}^\beta(R)$ with $\norm{\Phi'\circ f}_2\neq 0$. Then there is a  constant $c_1>0$, depending only on $\beta$, such that
\begin{equation*}
\lim_{\delta\rightarrow 0}\liminf_{n\rightarrow\infty}\inf_{\tilde{F}_n}\sup_{\substack{g\in\mathcal{C}^\beta(R):\\ \|f-g\|_2\le \delta}}n^{\frac{2\beta+1}{\beta+1}}\mathbb{E}_g\big[(\tilde{F}_n -\Funct(g))^2\big]> c_1R^{\frac{1}{\beta+1}}\|\Phi'\circ f\|_2^2.
\end{equation*}
The infimum is taken over all estimators in the PPP model with intensity \eqref{EqIntensity}.
\end{thm}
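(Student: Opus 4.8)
The plan is to apply the method of two fuzzy hypotheses (\cite[Chapter~2.7.4]{Tsyb09}) localized at $f$, using two priors that are products of independent, non-identically distributed Bernoulli variables indexing localized perturbations of $f$. First I would fix a partition of $[0,1]$ into $m=m_n$ intervals $I_1,\dots,I_m$ of length $1/m$, with $m$ of the order $(nR)^{1/(\beta+1)}$ chosen so that the $L^1$-norm of each perturbation below equals $c_0/n$ for a small fixed constant $c_0>0$ (equivalently $n\|\psi_j\|_1=c_0$ for all $j$), and attach to the central part of each $I_j$ a bump $\psi_j$ of height of order $Rm^{-\beta}$. The competing curves are $g_\tau=f+\sum_{j=1}^m\tau_j\psi_j$, $\tau\in\{0,1\}^m$. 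The delicate point---and, I expect, the main obstacle---is that each $g_\tau$ must lie in $\mathcal{C}^\beta(R)$ itself, not just in a dilate of it. This forces the sign and the precise profile of $\psi_j$ to be chosen according to the local behaviour of $f$ on $I_j$ (a downward bump near a local maximum of $f$, an upward one near a local minimum, and so on), and the fact that bumps of height of order $Rm^{-\beta}$ can be fitted in this way on all but $O(1)$ of the intervals relies crucially on the strict concavity of $t\mapsto t^\beta$ for $\beta<1$---the same mechanism behind the chord comparison used for Lemma~\ref{LemNormComp}. I would isolate this as a separate geometric lemma. (The boundary case $\beta=1$ requires a variant, since a Lipschitz-extremal $f$ only admits perturbations from a monotone cone of directions into $\mathcal{C}^1(R)$.)

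Next set $a_j=\int_{I_j}\bigl(\Phi(f+\psi_j)-\Phi(f)\bigr)\,dx$. Since $\Phi\in C^1$ and $f$ has compact range, uniform continuity of $\Phi'$ gives $a_j=\Phi'(f(x_j))\,\|\psi_j\|_1(1+o(1))$ uniformly in $j$ as $n\to\infty$, where $x_j\in I_j$. I take $\pi_0,\pi_1$ to be the laws of $g_\tau$ when $\tau_j\sim\mathrm{Bernoulli}(p_j^{(0)})$, resp.\ $\mathrm{Bernoulli}(p_j^{(1)})$, independently, with $p_j^{(1)}=\tfrac12+\tfrac{\lambda}{2}\,a_j/\|(a_k)_k\|_{\ell^2}$ and $p_j^{(0)}=\tfrac12-\tfrac{\lambda}{2}\,a_j/\|(a_k)_k\|_{\ell^2}$ for a small constant $\lambda>0$ fixed later; then $\sum_j(p_j^{(1)}-p_j^{(0)})^2=\lambda^2$ and, for $n$ large, $p_j^{(i)}\in(0,1)$ because $\max_j|a_j|/\|(a_k)_k\|_{\ell^2}\to0$. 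Because the bumps have pairwise disjoint supports, the restrictions of the PPP to the strips $I_j\times\R$ are independent, and---by the same computation with \cite[Theorem~4.4]{LastPenrose2016} used for unbiasedness---the likelihood ratio of $P_{g_\tau}$ with respect to $P_f$ restricted to $I_j\times\R$ takes only the values $e^{n\|\psi_j\|_1}$ (on the event that no observed point lies in the thin strip between $f$ and $f+\psi_j$) and $0$. Hence the two mixtures factorize over $j$ and
\begin{equation*}
\chi^2\bigl(P_{\pi_1}\,\big\|\,P_{\pi_0}\bigr)+1=\prod_{j=1}^m\Bigl(1+(p_j^{(1)}-p_j^{(0)})^2\,b(c_0)\Bigr)\le\exp\bigl(b(c_0)\,\lambda^2\bigr),
\end{equation*}
where $b(c_0)$ is an explicitly computable bounded quantity depending only on $n\|\psi_j\|_1=c_0$ (neither on $j$ nor on the direction of $\psi_j$), and $b(c_0)\to0$ as $c_0\to0$.

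Finally, on the functional side $\Funct(g_\tau)=\Funct(f)+\sum_j\tau_j a_j$, so $\mathbb{E}_{\pi_1}\Funct-\mathbb{E}_{\pi_0}\Funct=\sum_j(p_j^{(1)}-p_j^{(0)})a_j=\lambda\,\|(a_k)_k\|_{\ell^2}$, which is the Cauchy--Schwarz-optimal separation for the prescribed $\ell^2$-budget; a Riemann-sum argument (valid since $\Phi'\circ f$ is continuous, and only $O(1)$ exceptional intervals are discarded) shows $\|(a_k)_k\|_{\ell^2}^2=(1+o(1))\|\psi_1\|_1^2\sum_j\Phi'(f(x_j))^2$, of exact order $R^{1/(\beta+1)}\,\|\Phi'\circ f\|_2^2\,n^{-(2\beta+1)/(\beta+1)}$. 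Under either prior $\Funct(g_\tau)$ is a sum of independent bounded terms of total variance at most $\tfrac14\|(a_k)_k\|_{\ell^2}^2$, so Chebyshev's inequality bounds the two error probabilities in the two-fuzzy-hypotheses inequality by a multiple of $1/\lambda^2$. I then choose $c_0$ small, and $\lambda$ accordingly, so that these error probabilities together with $\tfrac12\sqrt{e^{b(c_0)\lambda^2}-1}$ (which dominates $\|P_{\pi_1}-P_{\pi_0}\|_{TV}$) sum to less than $1$; this is possible precisely because $b(c_0)\to0$. Then \cite[Theorem~2.15]{Tsyb09} gives $\inf_{\tilde F_n}\sup_\tau\mathbb{E}_{g_\tau}\bigl[(\tilde F_n-\Funct(g_\tau))^2\bigr]\ge c\,\|(a_k)_k\|_{\ell^2}^2$ for a positive constant $c$ depending only on $\beta$. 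Since $\|\sum_j\tau_j\psi_j\|_2^2\le m\cdot(Rm^{-\beta})^2/m=O\bigl((Rm^{-\beta})^2\bigr)\to0$, for every fixed $\delta>0$ all the $g_\tau$ lie in $\{g\in\mathcal{C}^\beta(R):\|f-g\|_2\le\delta\}$ once $n$ is large; as neither the construction nor the resulting bound depends on $\delta$, taking $\liminf_{n}$ and then $\lim_{\delta\to0}$ yields the claim with a constant $c_1>0$ depending only on $\beta$.
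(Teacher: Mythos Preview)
Your approach differs genuinely from the paper's. The paper takes the simpler null $\mathbf{P}_{0,n}=P_f$ (a point mass) against a single Bernoulli mixture $\mathbf{P}_{1,n}$ with success probabilities $p_k=a_k/\|a\|_{\ell^2}$ (normalised so that $\sum_k p_k^2=1$), and uses only \emph{upward} bumps; the $\chi^2$ divergence then collapses to the one-line product bound of Lemma~\ref{LemChiSquare}, and the signs of $\Phi'\circ f$ are handled a~posteriori by splitting into positive and negative parts. Your symmetric two-mixture design with $p_j^{(i)}=\tfrac12\pm\tfrac{\lambda}{2}a_j/\|a\|_{\ell^2}$ absorbs both signs of $a_j$ automatically and, more importantly, lets you choose the direction of each bump freely. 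This is precisely where your construction is more careful than the paper's: the paper only ensures $g_\theta\in\mathcal{C}^\beta(R)$, which yields $f+g_\theta\in\mathcal{C}^\beta(2R)$ rather than $\mathcal{C}^\beta(R)$, so its argument is literally complete only when $f$ lies in the interior of the H\"older ball.

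Two points in your sketch need attention. First, the geometric lemma you isolate is the real work in your route and is not proved; the claim that for $\beta<1$ one can fit, on all but $O(1)$ intervals, a bump of height of order $Rm^{-\beta}$ and \emph{some} sign while keeping $f+\psi_j\in\mathcal{C}^\beta(R)$ is plausible via the strict concavity of $t\mapsto t^\beta$, but a full argument (and the promised variant for $\beta=1$, where an extremal $f$ admits only one-sided perturbations) must be written out. Second, your sentence that $dP_{g_\tau}/dP_f$ restricted to $I_j\times\R$ takes only the values $e^{n\|\psi_j\|_1}$ and $0$ is correct only for upward bumps; when $\psi_j<0$ the measure $P_{f+\psi_j}$ is not absolutely continuous with respect to $P_f$ on that strip. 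Your $\chi^2$ computation between the two \emph{mixtures} nevertheless goes through (dominate by $Q_1=P_{(f+\psi_j)|_{I_j}}$ instead of $Q_0$ on those coordinates; by symmetry the resulting factor has the same form), but the exposition should be adjusted. A minor related point: the quantity you call $b(c_0)$ also depends on $p_j^{(0)}$, though since $\max_j\lvert p_j^{(0)}-\tfrac12\rvert\to 0$ this causes no difficulty for the bound.
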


\begin{proof}
We want to apply the $\chi^2$-version of the method of two fuzzy hypotheses as described in \cite[Theorem 2.15]{Tsyb09}. Consider the functions
\[
g_\theta=\sum_{k=1}^{m}\theta_kg_k\ \ \text{ with } \ \ \theta_k\in\{0,1\}
\]
and \[
g_k(x)=cRh^{\beta}K\left(\frac{x-(k-1)h}{h}\right)=cRh^{\beta+1}K_h(x-(k-1)h)
\]
with $h=1/m$, triangular kernel $K(u)=4(u\wedge (1-u))\mathbbm{1}_{[0,1]}(u)$, $K_h(\cdot)=K(\cdot/h)/h$ and $c>0$ sufficiently small such that $g_\theta\in\mathcal{C}^\beta(R)$ for all $m$ and $\theta$. Let $\pi_n$ be the probability measure on $\{0,1\}^m$ obtained when $\theta_1,\dots,\theta_m$ are independent (non-identical) Bernoulli random variables with success probabilities $p_1,\dots,p_m$. Let $P_g$ denote the law of the observations in the PPP model with intensity function \eqref{EqIntensity}. We set $\mathbf{P}_{0,n}=P_f$ and
\[
\mathbf{P}_{1,n}(\cdot)=\int P_{f+g_\theta}(\cdot)\pi_n(d\theta).
\]
In order to obtain the result, it suffices to find $m\ge 1$ and probabilities $p_1,\dots,p_m$ (both depending on $n$) as well as a constant $c_1>0$, only depending on $\beta$, and an absolute constant $c_2<\infty$, such that
\begin{enumerate}
\item[(i)] For each fixed $\delta>0$ the inequality $\|g_\theta\|_2\le \delta$ holds for all $n$ sufficiently large and for $n\to\infty$ the prior satisfies 
\begin{equation*}
\pi_n\Big( \Funct(f+g_\theta)\ge \Funct(f)+2c_1\|\Phi'\circ f\|_2R^{\frac{1/2}{\beta+1}}n^{-\frac{\beta+1/2}{\beta+1}}\Big)\rightarrow 1;
\end{equation*}

\item[(ii)] $\limsup_{n\to\infty} \chi^2(\mathbf{P}_{1,n},\mathbf{P}_{0,n})\le c_2$.
\end{enumerate}
We start with the following lemma on the $\chi^2$-distance.
\begin{lemma}\label{LemChiSquare} Suppose that the success probabilities satisfy $\sum_{k=1}^m p_k^2=1$. Then
\begin{equation*}
\chi^2(\mathbf{P}_{1,n},\mathbf{P}_{0,n})= \int\left(\frac{d\mathbf{P}_{1,n}}{d\mathbf{P}_{0,n}}\right)^2\,d\mathbf{P}_{0,n}-1\le
\exp\bigg(\exp\Big( n\int_{I_1}g_1(x)\,dx\Big)-1\bigg)-1
\end{equation*}
holds, where $I_1=[0,h)$.
\end{lemma}

\begin{proof}[Proof of Lemma \ref{LemChiSquare}]
We abbreviate $\int g_k=\int_{I_k}g_k(x)\,dx$, where $I_k=[(k-1)h,kh)$ for $k<m$ and $I_m=[1-h,1]$. Let us first see that
\begin{equation}\label{EqLikelihoodFuzzyH}
\frac{d\mathbf{P}_{1,n}}{d\mathbf{P}_{0,n}}=\prod_{k=1}^m\Big(1-p_k+p_ke^{n\int
g_k} \mathbbm{1}\big(\forall X_j\in I_k:Y_j\ge f(X_j)+g_k(X_j)\big)\Big).
\end{equation}
Indeed, by definition the left hand side is equal to
\begin{align*}
&\sum_{\theta\in\{0,1\}^m}\bigg(\prod_{k:\theta_k=0}(1-p_k)\prod_{k:\theta_k=1}p_k\bigg)\frac{dP_{f+g_\theta}}{dP_{f}}\\
&=\sum_{\theta\in\{0,1\}^m}\bigg(\prod_{k:\theta_k=0}(1-p_k)\prod_{k:\theta_k=1}p_ke^{n\int
g_k} \mathbbm{1}\big(\forall X_j\in I_k:Y_j\ge f(X_j)+g_k(X_j)\big)\bigg)\\
&=\prod_{k=1}^m\Big(1-p_k+p_ke^{n\int
g_k} \mathbbm{1}\big(\forall X_j\in I_k:Y_j\ge f(X_j)+g_k(X_j)\big)\Big),
\end{align*}
where we used the formula (see \cite[Theorem 1.3]{Kut98} or \cite[Section 3]{ReissSelk14})
\begin{align*}
\frac{dP_{f+g_\theta}}{dP_f}&=e^{n\int g_\theta}\mathbbm{1}\big(\forall
j:Y_j\ge f(X_j)+g_\theta(X_j)\big)\\
&=\prod_{k:\theta_k=1}e^{n\int g_k}\mathbbm{1}\big(\forall X_j\in I_k:Y_j\ge f(X_j)+g_k(X_j)\big)
\end{align*}
in the first equality. By the defining properties of the PPP, under $\mathbf{P}_{0,n}$, the right-hand side in \eqref{EqLikelihoodFuzzyH} is a product of independent random variables and the corresponding indicators have success probabilities $e^{-n\int g_k}$. Thus we obtain
\begin{align*}
\int\left(\frac{d\mathbf{P}_{1,n}}{d\mathbf{P}_{0,n}}\right)^2\,d\mathbf{P}_{0,n}&=\prod_{k=1}^m((1-p_k)^2+2p_k(1-p_k)+p_k^2e^{n\int
g_k})\\
&=\prod_{k=1}^m(1+p_k^2(e^{n\int g_k}-1))\\
&\le \prod_{k=1}^me^{p_k^2(e^{n\int g_1}-1)} = e^{e^{n\int g_1}-1},
\end{align*}
where we used the bound $1+x\le e^x$ and the assumption $\sum_{k=1}^m p_k^2=1$.
\end{proof}

Using Lemma \ref{LemChiSquare} and the identity
\[
n\int_{I_1}g_1(x)\,dx=cRnh^{\beta+1},
\]
we get (ii) provided that we choose $m=1/h$ of size $(Rn)^{1/(\beta+1)}$ and $p_1,\dots,p_m$ such that $\sum_{k=1}^mp_k^2=1$. Thus it remains to choose the $p_k$ such that the second convergence in (i) is satisfied.

We first consider the  case that $\Phi'\circ f\ge 0$.  Let $\eps>0$ be a small constant to be chosen later. Since $\Phi'$ is uniformly continuous on compact intervals, there is a $\delta'>0$ such that
\[
\int_0^1\Phi(f(x)+g(x))\,dx- \int_0^1\Phi(f(x))\,dx\ge \int_0^1\Phi'(f(x))g(x)\,dx-\eps\int_0^1|g(x)|\,dx
\]
for all $g\in\mathcal{C}^\beta(R)$ with $\|f-g\|_2\le \delta'$ (using \eqref{EqSupBound} above). Thus, for $n$ sufficiently large, we get
\begin{align*}
\Funct(f+g_\theta)-\Funct(f)&\ge \langle \Phi'\circ f,g_\theta\rangle-\eps\langle 1,g_\theta\rangle\\
& =\sum_{k=1}^m \theta_k\langle \Phi'\circ f,g_k\rangle-\eps \sum_{k=1}^m \theta_k\langle 1,g_k\rangle\\
&=cRh^{\beta+1}\bigg(\sum_{k=1}^m \theta_k\langle \Phi'\circ f,K_h(\cdot-(k-1)h)\rangle-\eps \sum_{k=1}^m \theta_k\bigg).
\end{align*}
Setting $a_k=\langle \Phi'\circ f,K_h(\cdot-(k-1)h)\rangle$, this can be written as
\begin{equation}\label{EqSepDiff}
\Funct(f+g_\theta)-\Funct(f)\ge cRh^{\beta+1}\bigg(\sum_{k=1}^m a_k\theta_k-\eps \sum_{k=1}^m \theta_k\bigg),
\end{equation}
The first sum is a weighted sum of independent non-identical Bernoulli random variables and the maximising choice for the success probabilities is
\begin{equation}\label{EqChoicePrior}
p_k=\frac{a_k}{\|a\|_2}
\end{equation}
(the $a_k$ satisfy $a_k\ge 0$  since we assumed $\Phi'\circ f\ge 0$).
By the mean value theorem and the fact that $\Phi'\circ f$ is continuous, we get $a_k=\Phi'(f(x_k))$ with $x_k\in [(k-1)h,kh]$ and also
\begin{equation}\label{EqIntConv}
\frac{1}{m}\|a\|_q^q=\frac{1}{m}\sum_{k=1}^m a_k^q\rightarrow \int_0^1 (\Phi'(f(x))^q\,dx=\|\Phi'\circ f\|_q^q\qquad\text{as } n\rightarrow \infty
\end{equation}
for each $q\ge 1$. Using the Chebyshev inequality we get
\begin{align*}
\pi_n\left(\sum_{k=1}^m a_k\theta_k< \|a\|_2/2\right)&= \pi_n\left(\sum_{k=1}^m a_k(\theta_k-p_k)< -\|a\|_2/2\right)\\
&\le \frac{4\sum_{k=1}^ma_k^2p_k(1-p_k)}{\|a\|_2^2}\le  4\left(\frac{\|a\|_3}{\|a\|_2}\right)^3
\end{align*}
and the latter converges to $0$ as $n\rightarrow\infty$ by \eqref{EqIntConv}. Similarly,
\begin{align*}
\pi_n\left(\sum_{k=1}^m\theta_k> 2\|a\|_1/\|a\|_2\right)
&=\pi_n\left(\sum_{k=1}^m(\theta_k-p_k)> \|a\|_1/\|a\|_2\right)\\
&\le \frac{\|a\|_2^2\sum_{k=1}^mp_k(1-p_k)}{\|a\|_1^2}\le \frac{\|a\|_2}{\|a\|_1}
\end{align*}
and the latter converges to $0$ as $n\rightarrow\infty$ by \eqref{EqIntConv}. Combining these two bounds with \eqref{EqSepDiff} we get
\begin{equation}\label{EqAsympSep}
\pi_n\left( \Funct(f+g_\theta)-\Funct(f)\ge cRh^{\beta+1/2}\left(\frac{1}{2\sqrt{m}}\|a\|_2-\eps\frac{2}{\sqrt{m}}\frac{\|a\|_1}{\|a\|_2}\right)\right)\rightarrow 1
\end{equation}
as $n\rightarrow\infty$. This implies (i) if $\eps$ is chosen small enough since $\|a\|_2/\sqrt{m}$ and $\|a\|_1/(\sqrt{m}\|a\|_2)$ have non-zero limits by \eqref{EqIntConv} and the assumption $\|\Phi'\circ f\|_2\neq 0$. This completes the proof in the case  $\Phi'\circ f\ge 0$.

 If $\Phi'\circ f\le 0$, then we may follow the same line of arguments where (ii) is replaced with a left-deviation inequality (which corresponds to apply the above arguments to the functional $F_{-\Phi}$). Next, if $\Phi'\circ f$ takes both, positive and negative values, then we may choose $p_k=a_{k+}/\|a_+\|_2$ (resp. $p_k=a_{k-}/\|a_-\|_2$) leading to a lower bound with $\|\Phi'\circ f\|_2^2$ replaced by $\|(\Phi'\circ f)_+\|_2^2$ (resp. $\|(\Phi'\circ f)_-\|_2^2$). Summing up both lower bounds gives the claim in the general case.
\end{proof}

\begin{remark}\label{RemConvexity} If $\Phi$ is convex, then we can replace \eqref{EqSepDiff} by
\[
\Funct(f+g_\theta)-\Funct(f)\ge \langle \Phi'\circ f,g_\theta\rangle=cRh^{\beta+1}\sum_{k=1}^m a_k\theta_k,
\]
leading to a shortening of the above proof.
In this case the lower bound also holds without continuity of $\Phi'$. The arguments, however, must be adapted slightly  since the convergence in \eqref{EqIntConv} may not hold in this case.
\end{remark}

\begin{remark}\label{RemLowBoundNonasymp}
By making the constants in the proof of Theorem \ref{PropLowBoundDiff} explicit, one can also establish non-asymptotic lower bounds which include lower-order terms. Consider for instance $\Phi(u)=|u|^p$, $p\in\N$ and $f\equiv a>0$. Then we have
\begin{align}
\Funct(a+g_\theta)-a^p&
=\bigg(\sum_{k=1}^{m}\theta_k\bigg)\sum_{j=1}^p\binom{p}{j}a^{p-j}c^jR^jh^{\beta j +1}\|K\|_j^j\nonumber\\
&\ge\bigg(\sum_{k=1}^{m}\theta_k\bigg) \max(pa^{p-1}cR h^{\beta+1},c^pR^p\|K\|_p^ph^{\beta p+1})\label{EqSepPol}.
\end{align}
We choose
\[
p_1=\dots=p_m=1/\sqrt{m}\ \ \text{ and } \ \ m=\lfloor 2(cRn)^{1/(\beta+1)}\rfloor.
\]
 In order to ensure $\|g_\theta\|_2 \le \delta$, it suffices that $m\ge 1$ and $2cRh^{\beta}\le\delta$ hold, which is satisfied if $n\ge c_1$ with $c_1$ depending only on $c$, $R$ and $\delta$. Now, by Lemma \ref{LemChiSquare} and the choice of $m$ we have $\chi^2(\mathbf{P}_{0,n},\mathbf{P}_{1,n})\le e^{e-1}-1$. Moreover, using the simplification of Remark \ref{RemConvexity}, \eqref{EqAsympSep} becomes
\begin{equation*}
\pi_n\Big( \Funct(a+g_\theta)\ge a^p+\frac{1}{2}\max(pa^{p-1}cR h^{\beta+1/2},c^pR^p\|K\|_p^ph^{\beta p+1/2})\Big)\ge 1-4/\sqrt{m}.
\end{equation*}
Inserting the value of $h$ and applying \cite[Theorem 2.15 (iii)]{Tsyb09}, we get
\begin{align*}
&\inf_{\tilde{F}_n}\sup_{\substack{g\in\mathcal{C}^\beta(R):\\ \|a-g\|_2\le \delta}}\mathbb{P}_g\Big(|\tilde{F}_n-\Funct(g)|\ge \max(c_2pa^{p-1}R^{\frac{1/2}{\beta+1}}n^{-\frac{\beta+1/2}{\beta+1}},c_3R^{\frac{p-1/2}{\beta+1}}n^{-\frac{\beta p+1/2}{\beta+1}})\Big)\\
& \ge \frac{1}{4}\exp(-(e^{e-1}-1))-2/\sqrt{m},
\end{align*}
provided that $n\ge c_1$, where $c_2$ is a constant depending only on $\beta$ and  $c_3$ is a constant depending only on $\beta$ and $p$. Thus we obtain a lower bound which has the form of the upper bound in Corollary \ref{CorPowInt} (resp. \eqref{EqBinomFormula}).
\end{remark}

\begin{remark}
In the case of linear functionals the above proof can be used to obtain the lower bound in \cite[Theorem 2.6]{ReissSelk14}. Instead of using the method of fuzzy hypothesis, one can also try to apply the method used in Rei\ss{} and Selk \cite{ReissSelk14} and Korostelev and Tsybakov \cite{KorTsy93} which is based on a comparison of the minimax risk with a Bayesian risk. This works for instance for the special case $\Phi(u)=|u|^p$, $p\in\N$, and $f\equiv a>0$, but it is not clear whether this structurally different prior can produce the correct lower bounds more generally.
\end{remark}

\section{Hypothesis testing}
\label{SecTesting}

\subsection{Main result}
In this section we use the previous results to address the hypothesis testing problem
\begin{equation*}
H_0:g=g_0\qquad\text{vs.}\qquad H_1:g\in g_0+\mathcal{G}_n,
\end{equation*}
where $g_0$ is a known function and
\[
\mathcal{G}_n=\mathcal{G}_n(\beta,R,p,r_n)=\{g\in \mathcal{C}^\beta(R):\|g\|_p\ge r_n\}.
\]
In the sequel, we restrict to the case $g_0=0$, since the general case can be reduced to this one by a simple shift of the observations.
We  propose the following plug-in test
\begin{equation}\label{EqPlugInTest}
\psi_{n,p} = \mathbbm{1}\big( \EstNorm \ge r^p_n/2\big),
\end{equation}
with the estimator $\EstNorm$ from \eqref{EqFhatLp}.
We follow a minimax approach to hypothesis testing, see e.g. \cite[Chapter 2.4]{IngsterSuslina03}.
Our main result of this section states that $\psi_{n,p}$ achieves the minimax separation rates:

\begin{thm}\label{TheoremTesting}
Let $p\ge 1$ be a real number and
\[
r_n^*=n^{-\frac{\beta+1/(2p)}{\beta+1}}.
\]
Then, the following holds as $n\rightarrow\infty$:
\begin{itemize}
\item[(a)] If $r_n/r^*_n\rightarrow \infty$, then the tests
$\psi_{n,p}$ from \eqref{EqPlugInTest} satisfy
 \[
\mathbb{E}_0[\psi_{n,p}] + \sup_{g \in\mathcal{G}_n}
\mathbb{E}_g[1-\psi_{n,p}]\rightarrow 0.
\]

\item[(b)] If $r_n/r^*_n\rightarrow 0$, then we have
\[
\inf_{\psi_n}\big(\mathbb{E}_0[\psi_n] + \sup_{g \in\mathcal{G}_n}
\mathbb{E}_g[1-\psi_n ]\big)\rightarrow 1,
\]
where the infimum is taken over all tests in the PPP model with intensity \eqref{EqIntensity}.
\end{itemize}
\end{thm}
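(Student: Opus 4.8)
The plan is to prove parts~(a) and~(b) separately. Part~(a) is an essentially immediate consequence of the risk bound for $\EstNorm$ together with Chebyshev's inequality; part~(b) reuses the bump prior from the proof of Theorem~\ref{PropLowBoundDiff}, but calibrated so that the $\chi^2$-distance between the hypotheses tends to zero rather than merely staying bounded, and the whole point is that the gap $r_n/r_n^*\to0$ provides exactly the room needed for this calibration.

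\emph{Part (a).} Under $H_0$ the estimator $\EstNorm$ from \eqref{EqFhatLp} is unbiased for $\|0\|_p^p=0$, and Corollary~\ref{CorPowInt} bounds its variance by $Cn^{-(2\beta p+1)/(\beta+1)}=C(r_n^*)^{2p}$ (using $\|0\|_{2p-2}^{2p-2}=0$ for $p>1$ and the convention $\|0\|_0^0=1$ for $p=1$), so Chebyshev's inequality gives
\[
\E_0[\psi_{n,p}]=\PP_0\big(\EstNorm\ge r_n^p/2\big)\le \frac{16C(r_n^*)^{2p}}{r_n^{2p}}\longrightarrow 0 .
\]
For $g\in\mathcal{G}_n$ one has $\|g\|_p^p\ge r_n^p$, hence $\{\EstNorm<r_n^p/2\}\subseteq\{|\EstNorm-\|g\|_p^p|\ge\|g\|_p^p/2\}$ and therefore $\E_g[1-\psi_{n,p}]\le 4\Var_g(\EstNorm)/\|g\|_p^{2p}$. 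It then remains to show $\sup_{g\in\mathcal{G}_n}\Var_g(\EstNorm)/\|g\|_p^{2p}\to0$: plugging in Corollary~\ref{CorPowInt}, the contribution of the $(r_n^*)^{2p}$-term is at most $C(r_n^*/r_n)^{2p}\to0$ since $\|g\|_p\ge r_n$, while for the term $\|g\|_{2p-2}^{2p-2}n^{-(2\beta+1)/(\beta+1)}$ I would bound $\|g\|_{2p-2}$ by $\|g\|_p$ via Jensen's inequality when $1\le p\le2$ and by Lemma~\ref{LemNormComp} when $p>2$; after dividing by $\|g\|_p^{2p}$ one is left with a negative power of $\|g\|_p\ge r_n$ times $n^{-(2\beta+1)/(\beta+1)}$, and a short computation (distinguishing $\|g\|_p\ge R$ from $r_n\le\|g\|_p<R$, using $r_n\ge r_n^*$ for $n$ large and $n^{-(2\beta+1)/(\beta+1)}(r_n^*)^{-2}=n^{-(1-1/p)/(\beta+1)}\le1$) shows this also tends to zero; for $p=1$ the variance bound is simply $C(r_n^*)^2$ and the claim is immediate.

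\emph{Part (b).} I would use the Bayesian reduction: for every test $\psi_n$,
\[
\E_0[\psi_n]+\sup_{g\in\mathcal{G}_n}\E_g[1-\psi_n]\ge 1-\mathrm{TV}(\mathbf{P}_{0,n},\mathbf{P}_{1,n})-\pi_n\big(g_\theta\notin\mathcal{G}_n\big)\ge 1-\tfrac12\sqrt{\chi^2(\mathbf{P}_{1,n},\mathbf{P}_{0,n})}-\pi_n\big(g_\theta\notin\mathcal{G}_n\big),
\]
where $\mathbf{P}_{0,n}=P_0$, $\mathbf{P}_{1,n}=\int P_{g_\theta}\,\pi_n(d\theta)$, and $g_\theta=\sum_{k=1}^m\theta_k g_k$ with the same bumps $g_k(x)=cRh^{\beta+1}K_h(x-(k-1)h)$, $h=1/m$, as in the proof of Theorem~\ref{PropLowBoundDiff}, now with $\theta_1,\dots,\theta_m$ i.i.d.\ Bernoulli of common success probability $1/\sqrt m$ (so $\sum_k p_k^2=1$) and $m=\lceil\omega_n(Rn)^{1/(\beta+1)}\rceil$ for a sequence $\omega_n\to\infty$ to be chosen. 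Since $c$ is fixed so that every $g_\theta\in\mathcal{C}^\beta(R)$, the event $\{g_\theta\notin\mathcal{G}_n\}$ reduces to $\{\|g_\theta\|_p<r_n\}$. By Lemma~\ref{LemChiSquare} and $n\int_{I_1}g_1=cRnh^{\beta+1}=c\,\omega_n^{-(\beta+1)}$,
\[
\chi^2(\mathbf{P}_{1,n},\mathbf{P}_{0,n})\le \exp\!\big(\exp(c\,\omega_n^{-(\beta+1)})-1\big)-1\longrightarrow 0
\]
as soon as $\omega_n\to\infty$. On the other hand the bumps have disjoint supports, so $\|g_\theta\|_p^p=(cR)^p\|K\|_p^p\,h^{p\beta+1}\sum_{k=1}^m\theta_k$; since $\sum_k\theta_k$ has mean $\sqrt m$ and variance at most $\sqrt m$, Chebyshev gives $\sum_k\theta_k\ge\sqrt m/2$ with probability tending to one, whence with probability tending to one $\|g_\theta\|_p\gtrsim h^{\beta+1/(2p)}=m^{-(\beta+1/(2p))}$, i.e.\ $\|g_\theta\|_p$ is of order $\omega_n^{-(\beta+1/(2p))}r_n^*$ up to a constant depending only on $R,K,\beta,p$. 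Choosing $\omega_n=(r_n^*/r_n)^{1/(2(\beta+1/(2p)))}$, which tends to infinity precisely because $r_n/r_n^*\to0$, makes this lower bound of order $(r_n^*/r_n)^{1/2}r_n^*\gg r_n$, so $\pi_n(\|g_\theta\|_p<r_n)\to0$. Combining the three displays, the infimum over all tests of the sum of the two errors is bounded below by a quantity tending to $1$; since the constant tests $\psi_n\equiv0$ and $\psi_n\equiv1$ show this quantity never exceeds $1$, it converges to $1$, which is the claim.

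\emph{Main obstacle.} The delicate point is the calibration of $m$ in part~(b). In the estimation lower bound the choice $m\asymp(Rn)^{1/(\beta+1)}$ kept $\chi^2$ merely bounded; for the testing statement we need $\chi^2\to0$, which forces $m$ strictly larger and hence pushes $\|g_\theta\|_p$ strictly below the critical level $r_n^*$. One must check that the full subcritical gap $r_n/r_n^*\to0$ can be spent on letting $\omega_n\to\infty$ while still keeping $\|g_\theta\|_p\ge r_n$; the factorisation of $\chi^2$ from Lemma~\ref{LemChiSquare} and the disjoint-support formula for $\|g_\theta\|_p^p$ make this trade-off transparent. The remaining bookkeeping — the case analysis for $p>2$ in part~(a) via Lemma~\ref{LemNormComp}, and the elementary Le Cam and $\chi^2$ inequalities — is routine.
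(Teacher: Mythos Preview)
Your proposal is correct and follows essentially the same approach as the paper: part~(a) via Chebyshev together with Corollary~\ref{CorPowInt} and the interpolation Lemma~\ref{LemNormComp} for $p>2$, and part~(b) via the same i.i.d.\ Bernoulli$(1/\sqrt m)$ bump prior combined with Lemma~\ref{LemChiSquare}. The only cosmetic differences are that the paper discretises $\|g\|_p$ via a dyadic level $r_{n,k}\asymp\|g\|_p$ in~(a) and parametrises $h$ directly in terms of $r_n$ in~(b), whereas you work with $\|g\|_p$ itself and introduce the auxiliary sequence $\omega_n$; both routes yield the same estimates (note a harmless sign slip in your display: $\omega_n^{-(\beta+1/(2p))}r_n^*=(r_n/r_n^*)^{1/2}r_n^*$, not $(r_n^*/r_n)^{1/2}r_n^*$, but the conclusion $\gg r_n$ is unaffected).
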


\subsection{Proof of the upper bound}
Throughout the proof $C>0$ denotes a constant depending only on $R$, $\beta$ and $p$ that may change from line to line. Under the null hypothesis we have, using the Chebyshev inequality and Corollary \ref{CorPowInt},
\begin{align}\label{EqTypeIError}
\mathbb{E}_0[\psi_{n,p}]
= \mathbb{P}_0(\EstNorm \ge  r^p_n/2)
 &\le \frac{4\mathbb{E}_0[\EstNorm^2]}{r^{2p}_n}\le C\dfrac{n^{-\frac{2\beta p + 1}{\beta +1} }}{r^{2p}_n}=C\left(\dfrac{r^*_n}{r_n}\right)^{2p}
\end{align}
and by assumption the right-hand side tends to zero as $n\rightarrow\infty$. Next, consider the type-two error $\mathbb{E}_g[1-\psi_{n,p} ]$ with $g\in\mathcal{G}_n$. Let $k\in\N$ be such that $2^{k-1}r_n^p\le \|g\|_p^p< 2^kr_n^p$ and set $r_{n,k}=2^{k/p}r_n$. By the Chebyshev inequality, we have
\begin{align}
\mathbb{E}_g[1-\psi_{n,p}]= \mathbb{P}_g(\EstNorm <  r^p_n/2)
&=  \mathbb{P}_g(\|g\|_p^p -\EstNorm > \|g\|_p^p - r^p_n/2)  \nonumber\\
&\le \mathbb{P}_g(\|g\|_p^p -\EstNorm > r_{n,k}^p/4) \nonumber\\
&\le \frac{16\mathbb{E}_g[(\EstNorm-\|g\|_p^p)^2]}{r^{2p}_{n,k}}.\label{EqChevyshev}
\end{align}
Now, we may restrict ourselves to the case that
\begin{equation}\label{EqNonzeroRate}
\|g\|_{2p-2}^{2p-2}n^{-\frac{2\beta+1}{\beta+1}}\ge n^{-\frac{2\beta p+1}{\beta+1}}.
\end{equation}
Indeed, if \eqref{EqNonzeroRate} does not hold, then the maximal type-two error is also bounded by $C(r^*_n/r_n)^{2p}$, as can be seen by the same argument as in \eqref{EqTypeIError}. By \eqref{EqChevyshev}, \eqref{EqNonzeroRate} and Corollary \ref{CorPowInt}, we obtain
\begin{equation}\label{EqTypeII}
\mathbb{E}_g[1-\psi_{n,p}]\le C\Vert g \Vert^{2p-2}_{2p-2}\dfrac{n^{-\frac{2\beta+1}{\beta+1}}}{r^{2p}_{n,k}}.
\end{equation}
Let us consider the cases $1\le p \le 2$ and $p>2$ separately.
If $1< p \le 2$, then we have $\|g\|_{2p-2}\le\|g\|_p\le r_{n,k}$ by the H\"older inequality and the definition of $k$. Thus, for $1\le p \le 2$, we get
\begin{align*}
\mathbb{E}_g[1-\psi_{n,p}]
 \le  	C\frac{n^{-\frac{2\beta+1}{\beta+1}}}{r_{n,k}^{2} }\le C\left(\dfrac{r^*_n }{r_{n,k}}\right)^2 n^{- \frac{2\beta+1}{\beta+1}+
\frac{2\beta+1/p}{\beta+1}}\le C\left(\dfrac{r^*_n }{r_n}\right)^2.
\end{align*}
Taking the supremum over all $g\in\mathcal{G}_n$, the right-hand side tends to zero as $n\rightarrow \infty$. Next, consider the case $p> 2$.
Applied with $q=2p-2>p$, Lemma \ref{LemNormComp} gives
\begin{equation}\label{EqLemNormComp}
\Vert g \Vert^{2p-2}_{2p-2}\le C\|g\|_p^{2p-2}\max(1, \|g\|_p^{-1})^{ \frac{1-2/p}{\beta  +1/p}}.
\end{equation}
If $\|g\|_p>1$, then the claim follows as in the case $1\le p \le 2$.  If $\|g\|_p\le 1$, then by \eqref{EqTypeII} and \eqref{EqLemNormComp}, we have
\begin{align*}
\mathbb{E}_g[1-\psi_{n,p}]
&\le  C {r_{n,k}^{ -2-\frac{1-2/p}{\beta  +1/p}}}   n^{- \frac{2\beta+1}{\beta+1}}\\
&= C\left(\frac{r^*_n}{r_{n,k}}\right)^{ \frac{ 2\beta  +1}{\beta  +1/p}}
n^{\frac{2\beta+1}{\beta+1/p}\frac{\beta+1/2p}{\beta+1}  - \frac{2\beta+1}{\beta+1}
}\le C\left(\dfrac{r^*_n}{r_n}\right)^{ \frac{ 2\beta  +1}{\beta  +1/p}}.
\end{align*}
Again, taking the supremum over all $g\in\mathcal{G}_n$, the right-hand side tends to zero as $n\rightarrow \infty$. This completes the proof of (i). \qed

\subsection{Proof of the lower bound}\label{SecLowbound}
We set $\mathbf{P}_{1,n}(\cdot)=\int P_{g_\theta}(\cdot)\pi_n(d\theta)$ and $\mathbf{P}_{0,n}=P_0$ with $g_\theta$ and $\pi_n$ as in the proof of Theorem \ref{PropLowBoundDiff} with the choice
\begin{equation}\label{EqIdPrior}
p_1=\dots=p_m=1/\sqrt{m}.
\end{equation}
By \cite[Proposition 2.9 and Proposition 2.12]{IngsterSuslina03}, in order that Theorem \ref{TheoremTesting} (ii) holds, we have to show that as $n\rightarrow \infty$,
\begin{enumerate}
\item[(i)] $\pi_n(g_\theta\in\mathcal{G}_n)\rightarrow 1$;
\item[(ii)] $\chi^2(\mathbf{P}_{1,n},\mathbf{P}_{0,n})\rightarrow 0$.
\end{enumerate}
For (i), note that
\[
\|g_\theta\|_p=\Big(\sum_{k=1}^m\theta_k\Big)^{1/p}cRh^{\beta+1/p}\|K\|_p.
\]
By the Chebyshev inequality, we have
\begin{align*}
\pi_n\bigg(\Big(\sum_{k=1}^m\theta_k\Big)^{1/p}\le 2^{-1/p} m^{1/(2p)}\bigg)&=\pi_n\bigg(\sum_{k=1}^m(\theta_k-1/\sqrt{m})\le -\sqrt{m}/2\bigg)\\
&\le \frac{4m(1/\sqrt{m})(1-1/\sqrt{m})}{m},
\end{align*}
where the right-hand side tends to zero as $m\rightarrow\infty$. Thus (i) holds provided that we choose $m^{-1}=h$ of size
\begin{equation*}
c_1r_n^{\frac{1}{\beta+1/(2p)}}
\end{equation*}
with $c_1>0$ depending only on $R$ and $p$. Moreover, by Lemma \ref{LemChiSquare} and \eqref{EqIdPrior}, we have
\begin{equation*}
\chi^2(\mathbf{P}_{1,n},\mathbf{P}_{0,n})\le \exp\big(\exp(cRnh^{\beta+1})-1\big)-1.
\end{equation*}
Inserting the above choice of $h$, the last expression goes to zero as $n\rightarrow\infty$, since
\[
nr_n^{\frac{\beta+1}{\beta+1/(2p)}}=(r_n/r_n^*)^{\frac{\beta+1}{\beta+1/(2p)}}\rightarrow 0.
\]
This completes the proof.\qed

\section{Estimating the $L^p$-norm}\label{SecEstLp}

Finally let us consider the problem of estimating the $L^p$-norm of $g$. We define the estimator $\hat{T}$ of $\|g\|_p$ by
\[
\hat{T}=\big(\max(\EstNorm, 0)\big)^{1/p}=(\EstNorm)_+^{1/p}.
\]
Our main result of this section is as follows:
\begin{thm}\label{TheoremEstNorm}
Let $p\ge 1$ be a real number. Then we have
\[
\sup_{g\in \mathcal{C}^\beta(R)}\mathbb{E}_g[|\hat{T}-\|g\|_p|]\le Cn^{-\frac{\beta+1/(2p)}{\beta+1}}
\]
with a constant $C>0$ depending only on $R$, $\beta$ and $p$. 

On the other hand, we have
\[
\liminf_{n\rightarrow\infty}n^{\frac{\beta+1/(2p)}{\beta+1}}\inf_{\tilde T_n}\sup_{g\in \mathcal{C}^\beta(R)}\mathbb{E}_g[|\tilde{T}_n-\|g\|_p|]>0,
\]
where the infimum is taken over all estimators in the PPP Model with intensity \eqref{EqIntensity}. 
In particular, the minimax rate of estimation over $\mathcal{C}^\beta(R)$ is  $n^{-(\beta+1/(2p))/(\beta+1)}$.
\end{thm}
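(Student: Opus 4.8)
The plan is to split the statement into the upper bound and the lower bound. For the upper bound, the idea is to control $|\hat T - \|g\|_p|$ by translating the control we already have on $\EstNorm - \|g\|_p^p$ from Corollary \ref{CorPowInt} through the map $t\mapsto t_+^{1/p}$. First I would use the elementary inequality $|a_+^{1/p}-b^{1/p}|\le |a-b|^{1/p}$ valid for $a\in\R$, $b\ge 0$ (since $x\mapsto x^{1/p}$ is $1/p$-Hölder on $[0,\infty)$ and truncation at $0$ only helps), applied with $a=\EstNorm$ and $b=\|g\|_p^p$, to get $\E_g[|\hat T-\|g\|_p|]\le \E_g[|\EstNorm-\|g\|_p^p|^{1/p}]\le (\E_g[(\EstNorm-\|g\|_p^p)^2])^{1/(2p)}$ by Jensen. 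Then Corollary \ref{CorPowInt} gives $\E_g[(\EstNorm-\|g\|_p^p)^2]\le C\max(\|g\|_{2p-2}^{2p-2}n^{-(2\beta+1)/(\beta+1)}, n^{-(2\beta p+1)/(\beta+1)})$. The second term raised to the power $1/(2p)$ is exactly $C n^{-(\beta+1/(2p))/(\beta+1)}$, the claimed rate. For the first term one must handle the factor $\|g\|_{2p-2}^{2p-2}$: when $1\le p\le 2$ one has $2p-2\le p$ (indeed $\le 2$), and $\|g\|_{2p-2}$ is bounded on $\mathcal C^\beta(R)$ purely in terms of $R$ (a Hölder ball of radius $R$ sits in an $L^q$-ball of radius depending only on $R,\beta,q$), so the first term is $\le C n^{-(2\beta+1)/(\beta+1)}$, which to the power $1/(2p)$ is even faster than the required rate; when $p>2$ one instead invokes Lemma \ref{LemNormComp} with $q=2p-2$ (or just the crude bound $\|g\|_{2p-2}\le C(R)$ on the Hölder ball, which again suffices since the Hölder condition forces boundedness) to conclude $\|g\|_{2p-2}^{2p-2}\le C$, and the same computation applies. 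Taking the supremum over $g\in\mathcal C^\beta(R)$ then yields the upper bound.

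For the lower bound, the plan is to reuse verbatim the two-fuzzy-hypotheses construction from the proof of Theorem \ref{PropLowBoundDiff} / the lower-bound proof of Theorem \ref{TheoremTesting}, now applied to the functional $g\mapsto\|g\|_p$ rather than to $\Funct$. I would take $g_\theta=\sum_{k=1}^m\theta_k g_k$ with the bumps $g_k(x)=cRh^{\beta+1}K_h(x-(k-1)h)$, $h=1/m$, the identical prior $p_1=\dots=p_m=1/\sqrt m$, and $f\equiv 0$ as the null. Then $\|g_\theta\|_p=(\sum_k\theta_k)^{1/p}cRh^{\beta+1/p}\|K\|_p$, and by the same Chebyshev argument as in Section \ref{SecLowbound}, $\pi_n$-typically $\sum_k\theta_k\ge m/2$, so $\|g_\theta\|_p\gtrsim m^{1/(2p)}cRh^{\beta+1/p}\|K\|_p = c' R\,h^{\beta+1/(2p)}$ while $\|g_\theta\|_p$ is of the same order from above, i.e. $\|g_\theta\|_p\asymp h^{\beta+1/(2p)}$ with high $\pi_n$-probability. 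Choosing $h$ of order $n^{-1/(\beta+1)}$ makes this separation of order $n^{-(\beta+1/(2p))/(\beta+1)}=r_n^*$, and with this choice Lemma \ref{LemChiSquare} gives $\chi^2(\mathbf P_{1,n},\mathbf P_{0,n})\le \exp(\exp(cRnh^{\beta+1})-1)-1=\exp(e^{cR}-1)-1$, a finite constant. Since under $\mathbf P_{0,n}$ we have $\|g\|_p=0$ and under the mixture $\|g_\theta\|_p\gtrsim r_n^*$, the standard two-point / fuzzy-hypotheses reduction (Tsybakov \cite[Theorem 2.15]{Tsyb09}) converts the bounded $\chi^2$-distance into a strictly positive lower bound for $\inf_{\tilde T_n}\sup_g \PP_g(|\tilde T_n-\|g\|_p|\ge \tfrac12 r_n^*)$, and Markov's inequality turns this into the claimed $\liminf_n n^{(\beta+1/(2p))/(\beta+1)}\inf_{\tilde T_n}\sup_g\E_g[|\tilde T_n-\|g\|_p|]>0$.

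I expect the routine parts — the Hölder inequality $|a_+^{1/p}-b^{1/p}|\le|a-b|^{1/p}$, Jensen, the boundedness of $\|\cdot\|_{2p-2}$ on the Hölder ball, and the Chebyshev estimates for $\sum_k\theta_k$ — to be straightforward. The one genuine point requiring care is the passage from the quadratic risk bound of Corollary \ref{CorPowInt} to a bound on $\E_g[|\EstNorm-\|g\|_p^p|^{1/p}]$ with the \emph{right} exponent: one must make sure that taking the $1/(2p)$-th power of the dominant term $n^{-(2\beta p+1)/(\beta+1)}$ indeed reproduces $n^{-(\beta+1/(2p))/(\beta+1)}$ (it does, since $(2\beta p+1)/(2p)=\beta+1/(2p)$) and that the subdominant term genuinely contributes a faster rate after the exponent $1/(2p)$ is applied — this is the place where one uses that $p\ge 1$ so that $1/(2p)\le 1/2$ does not spoil the gain. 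Beyond that, the matching lower bound is essentially a re-run of the argument already carried out for the testing problem, so no new obstacle arises there.
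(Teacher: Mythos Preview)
Your upper-bound argument has a genuine gap. The H\"older inequality $|a_+^{1/p}-b^{1/p}|\le|a-b|^{1/p}$ is too crude when $\|g\|_p$ is not small. Raising the first term of Corollary~\ref{CorPowInt} to the power $1/(2p)$ gives
\[
\|g\|_{2p-2}^{(p-1)/p}\,n^{-\frac{2\beta+1}{2p(\beta+1)}},
\]
and for every $p>1$ the exponent $(2\beta+1)/(2p(\beta+1))$ is strictly \emph{smaller} than the target exponent $(\beta+1/(2p))/(\beta+1)=(2p\beta+1)/(2p(\beta+1))$. So even for a fixed $g$ with, say, $\|g\|_{2p-2}=1$, your bound is $n^{-(2\beta+1)/(2p(\beta+1))}$, which is slower than the claimed rate; your assertion that this term is ``even faster than the required rate'' has the inequality backwards. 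Moreover, $\mathcal{C}^\beta(R)$ is defined via the H\"older \emph{seminorm} only and contains all constants, so $\|g\|_{2p-2}$ is unbounded on it; the parenthetical ``a H\"older ball of radius $R$ sits in an $L^q$-ball'' is false for this class.

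What is missing is the complementary inequality coming from the \emph{smoothness} of $t\mapsto t^{1/p}$ away from $0$: for $a\in\R$, $b>0$,
\[
|a_+^{1/p}-b^{1/p}|\le \frac{|a-b|}{b^{1-1/p}},
\]
which yields $\E_g[|\hat T-\|g\|_p|]\le \E_g[(\EstNorm-\|g\|_p^p)^2]^{1/2}/\|g\|_p^{p-1}$. The paper splits according to whether $\|g\|_p\le n^{-(\beta+1/(2p))/(\beta+1)}$ or not. In the small-$\|g\|_p$ regime your H\"older/Jensen route does work, once one uses Lemma~\ref{LemNormComp} to show $\|g\|_{2p-2}\le Cn^{-\beta/(\beta+1)}$ (not merely $\le C$). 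In the large-$\|g\|_p$ regime one uses the convexity inequality above together with Lemma~\ref{LemNormComp} to control $\|g\|_{2p-2}^{p-1}/\|g\|_p^{p-1}$; this is where the delicate interpolation enters and where the argument is not routine.

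Your lower-bound plan is correct and coincides with the paper's: the same bump construction with identical Bernoulli prior $p_k=1/\sqrt m$, $h\asymp n^{-1/(\beta+1)}$, Lemma~\ref{LemChiSquare}, and Chebyshev for $\sum_k\theta_k$ gives the separation $\|g_\theta\|_p\gtrsim n^{-(\beta+1/(2p))/(\beta+1)}$ with bounded $\chi^2$-divergence. The paper phrases this as a reduction to the testing lower bound of Theorem~\ref{TheoremTesting}, but the underlying construction is the one you describe.
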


\begin{proof}
The lower bound follows from the lower bound in Theorem \ref{TheoremTesting}. To see this, let $r^{est}_n=\inf_{\tilde T_n}\sup_{g\in \mathcal{C}^\beta(R)}\mathbb{E}_g[|\tilde{T}_n-\|g\|_p|]$ be the minimax risk. If the lower bound in Theorem \ref{TheoremEstNorm} was false, then $r^{est}_{n_k}/r^*_{n_k}\rightarrow 0$ along a subsequence $(n_k)$. Now construct $(r_{n_k})$ such that $r_{n_k}/r_{n_k}^*\rightarrow 0$ and $r_{n_k}/r_{n_k}^{est}\rightarrow \infty$. Using \cite[Proposition 2.17]{IngsterSuslina03} and the fact that $r_{n_k}/r_{n_k}^{est}\rightarrow \infty$, we would get $\mathbb{E}_0[\psi_{n_k}] + \sup_{g \in\mathcal{G}_{n_k}}
\mathbb{E}_g[1-\psi_{n_k}]\rightarrow 0$ for suitable plug-in tests $\psi_{n_k}$ based on minimax optimal estimators, contradicting the lower bound in Theorem \ref{TheoremTesting} and the fact that $r_{n_k}/r_{n_k}^*\rightarrow 0$.

It remains to prove the upper bound.
Throughout the proof $C>0$ denotes a constant depending only on $R$, $\beta$ and $p$ that may change from line to line. Since the case $p=1$ is covered in Corollary \ref{CorPowInt}, we  restrict to the case $p>1$. By the convexity of $y\mapsto y^p$, we have (for non-negative real numbers $a\neq b$ the inequality $(b^p-a^p)/(b-a)\ge \max(a,b)^{p-1}$ holds)
\[
|\hat{T}-\|g\|_p|\le \frac{|\hat{T}^p-\|g\|_p^p|}{\|g\|_p^{p-1}}.
\]
Hence,
\begin{equation}\label{EqSmoothNorm}
\mathbb{E}_g[|\hat{T}-\|g\|_p|]\le \frac{\mathbb{E}_g[(\hat{T}^p-\|g\|_p^p)^2]^{1/2}}{\|g\|_p^{p-1}}\le \frac{\mathbb{E}_g[(\EstNorm-\|g\|_p^p)^2]^{1/2}}{\|g\|_p^{p-1}},
\end{equation}
where we also used the fact that $\hat T^p=(\EstNorm)_+$ improves $\EstNorm$ (see also Remark \ref{RemPosPart}).
On the other hand, we also have $|\hat{T}-\|g\|_p|\le |\hat{T}|+\|g\|_p$, which leads to
\begin{align}
\mathbb{E}_g[|\hat{T}-\|g\|_p|]&\le \mathbb{E}_g[\hat{T}^p]^{1/p}+\|g\|_p\nonumber\\&\le \mathbb{E}_g[|\hat{T}^p-\|g\|_p^p|]^{1/p}+2\|g\|_p \nonumber\\
& \le \mathbb{E}_g[(\EstNorm-\|g\|_p^p)^2]^{1/(2p)}+2\|g\|_p,\label{EqSingNorm}
\end{align}
where we applied the Hölder inequality and the concavity of the function $y\mapsto y^{1/p}$ (for non-negative real numbers $a\neq b$ the inequality $(a+b)^{1/p}\le a^{1/p}+b^{1/p}$ holds).

If $\norm{g}_p\le n^{-(\beta+1/(2p))/(\beta+1)}$, then by \eqref{EqSingNorm} and Corollary \ref{CorPowInt} it suffices to show
\[ \max\Big(\|g\|_{2p-2}^{2p-2}n^{-\frac{2\beta+1}{\beta+1}},n^{-\frac{2\beta p+1}{\beta+1}}\Big)^{1/(2p)}\le Cn^{-\frac{\beta+1/(2p)}{\beta+1}},
\]
which itself follows from $\|g\|_{2p-2}\le Cn^{-\beta/(\beta+1)}$. For $p\le 2$ the latter holds because of $\|g\|_{2p-2}\le\|g\|_p\le n^{-(\beta+1/(2p))/(\beta+1)}$. For $p>2$ this is implied by Lemma \ref{LemNormComp}:
\[ \|g\|_{2p-2}\le C\max(\|g\|_p,\,\|g\|_p^{(\beta+1/(2p-2))/(\beta+1/p)})\le C\|g\|_p^{\beta/(\beta+1/(2p))}\le Cn^{-\beta/(\beta+1)},
\]
using first $\|g\|_p\le 1$ and then $1/(2p-2)\ge 1/(2p)$.

In the opposite case $\norm{g}_p>n^{-(\beta+1/(2p))/(\beta+1)}$ we apply \eqref{EqSmoothNorm}, Corollary \ref{CorPowInt} and obtain the result if
\[ \max\Big(\|g\|_{2p-2}^{p-1}n^{-\frac{\beta+1/2}{\beta+1}},n^{-\frac{\beta p+1/2}{\beta+1}}\Big)\le C \|g\|_p^{p-1} n^{-(\beta+1/(2p))/(\beta+1)}.
\]
For $p\le 2$ this follows again by $\|g\|_{2p-2}\le\|g\|_p$. For $p>2$ Lemma \ref{LemNormComp} yields the bound
\[ \|g\|_{2p-2}^{p-1}\le C\|g\|_{p}^{p-1} \max(1,\|g\|_p^{(1/2-(p-1)/p)/(\beta+1/p)}\,) \le C\|g\|_p^{p-1}n^{(1/2-1/(2p))/(\beta+1)},
\]
using $((p-1)/p-1/2)(\beta+1/(2p))=(1/2-1/p)(\beta+1/(2p))<(1/2-1/(2p))(\beta+1/p)$. Inserting the bound thus gives the result also for $p>2$.
\end{proof}

\begin{remark}
For the problem of estimating $g$ in $L^\infty$-norm, Drees, Neumeyer and Selk \cite{DNS14} established the rate $(n^{-1}\log n)^{\beta/(\beta+1)}$ (in a boundary regression model). This result is then used to analyse goodness-of-fit tests for parametric classes of error distributions.
\end{remark}

\begin{remark}
Note that we can consider the minimax risk over the whole Hölder class $\mathcal{C}^\beta(R)$ in the case of estimating the norm $\|g\|_p$. In distinction to Corollary \ref{CorPowInt}, the upper bound does not depend on any $L^q$-norm of $g$.

Inspecting the proof, we see more precisely that the minimax rate is driven by functions whose $L^p$-norm is smaller than $n^{-(\beta+1/(2p))/(\beta+1)}$. For functions which have a substantially larger norm we get the rate of convergence $n^{-(\beta+1/2)/(\beta+1)}$ corresponding to a smooth functional. This is explained by the fact that the $L^p$-norm is a non-smooth functional at $g=0$.
\end{remark}
\begin{remark} There is a close connection between Theorem \ref{TheoremEstNorm} and Theorem \ref{TheoremTesting}. First, the upper bound in Theorem \ref{TheoremTesting} follows from Theorem \ref{TheoremEstNorm} by using e.g. \cite[Proposition 2.17]{IngsterSuslina03}. Second, the lower bound in Theorem \ref{TheoremEstNorm} is a consequence of the lower bound in Theorem \ref{TheoremTesting}.
\end{remark}
\appendix

\section{Proof of Lemma \ref{LemNormComp}}

Let us first show that the general case can be deduced from the special case $q=\infty$ and suppose that
\begin{equation}\label{EqNormComp}
\|f\|_\infty\le C\|f\|_p\max\big(1,R/\|f\|_p\big)^{\frac{1/p}{\beta +1/p}}
\end{equation}
holds. Clearly, we have
\begin{equation}\label{EqStart}
\|f\|_q^q\le \|f\|_\infty^{q-p}\|f\|_p^p.
\end{equation}
Now, if $\|f\|_p>R$, then \eqref{EqNormComp} and \eqref{EqStart} give $\|f\|_{q}\le C^{1-p/q}\|f\|_{p}$. On the other hand, if $\|f\|_p\le R$, then \eqref{EqNormComp} and \eqref{EqStart} give
\begin{equation*}
\|f\|_{q}^q\le C^{q-p}\|f\|_p^q(R/\|f\|_p)^{\frac{(q-p)/p}{\beta +1/p}}
\end{equation*}
and thus
\begin{equation*}
\|f\|_{q}\le C^{1-p/q}\|f\|_p(R/\|f\|_p)^{\frac{1/p-1/q}{\beta +1/p}}.
\end{equation*}
It remains to prove \eqref{EqNormComp}. Using the definition of $\mathcal{C}^\beta(R)$, we get
\begin{equation*}
\|f\|_p^p=\int_0^1|f(x)|^p\,dx\ge \int_0^{\min(1,(\|f\|_\infty/R)^{1/\beta})}(\|f\|_\infty-Rx^\beta)^p\,dx.
\end{equation*}
Setting $a=\|f\|_\infty$ and $b=(\|f\|_\infty/R)^{1/\beta}$, we obtain
\begin{align*}
\int_0^1|f(x)|^p\,dx\ge \int_0^{\min(1,b)}(a-a(x/b)^\beta)^p\,dx
&=a^p\int_0^{\min(1,b)}(1-(x/b)^\beta)^p\,dx\\
&\ge a^p\min(1,b)\int_0^1(1-y^\beta)^p\,dy,
\end{align*}
where we make the substitution $x=by$ if $b\le 1$ and use the inequality $1-(x/b)^\beta\ge 1-x^\beta$ if $b>1$. Thus we have proven
\begin{align*}
\|f\|_p\ge \|f\|_\infty\min\big(1,\|f\|_\infty/R \big)^{\frac{1}{\beta p}}\|1-y^\beta\|_p,
\end{align*}
which gives \eqref{EqNormComp}.

\bibliographystyle{plain}
\bibliography{lit}

\end{document}